\journal{}
\begin{document}

\begin{frontmatter}

  \title{Multiscale modeling of fiber reinforced materials via non-matching immersed methods}
  \author[sissa]{Giovanni Alzetta}
  \ead{giovannialzetta@gmail.com}
  \author[sissa]{Luca Heltai}
  \ead{luca.heltai@sissa.it}

  \address[sissa]{SISSA-International School for Advanced Studies\\
    via Bonomea 265, 34136 Trieste - Italy}


  \begin{abstract}
    Fiber reinforced materials (FRMs) can be modeled as bi-phasic
    materials, where different constitutive behaviors are associated
    with different phases. The numerical study of FRMs through a full
    geometrical resolution of the two phases is often computationally
    infeasible, and therefore most works on the subject resort to
    homogenization theory, and exploit strong regularity assumptions on
    the fibers distribution. Both approaches fall short in intermediate
    regimes where lack of regularity does not justify a homogenized
    approach, and when the fiber geometry or their numerosity render
    the fully resolved problem numerically intractable.

    In this paper, we propose a distributed Lagrange multiplier
    approach, where the effect of the fibers is superimposed on a
    background isotropic material through an independent description of
    the fibers.  The two phases are coupled through a constraint
    condition, opening the way for intricate fiber-bulk couplings as
    well as allowing complex geometries with no alignment requirements
    between the discretisation of the background elastic matrix and the
    fibers.

    We analyze both a full order coupling, where the elastic matrix is
    coupled with fibers that have a finite thickness, as well as a
    reduced order model, where the position of their centerline uniquely
    determines the fibers. Well posedness, existence, and uniquess of
    solutions are shown both for the continuous models, and for the
    finite element discretizations. We validate our approach against the
    models derived by the rule of mixtures, and by the Halpin-Tsai
    formulation.
  \end{abstract}

  \begin{keyword}
    Immersed boundary method \sep finite element method \sep fiber composites \sep Lagrange multipliers
  \end{keyword}

\end{frontmatter}


\section{Introduction}
\label{sec:introduction}

Numerous engineering applications require the efficient solution of
partial differential equations involving multiple, complex geometries
on different phases; \emph{composite materials} are the
prototypical example of such problems. During the past fifty years,
the interest in composite materials flourished multiple times;
it began for their applications to new materials
in multiple fields, such as aerospace engineering
\cite{hashin1972theory}, civil engineering \cite{mehta1986concrete},
and materials science \cite{behzad2007finite, mallick2007fiber}.

During the nineties, the increasing importance of biomechanics in
life sciences lead to the development of numerous models describing, e.g.,
arterial walls \cite{holzapfel2000new}, soft tissues
\cite{holzapfel2001biomechanics}, and muscle fibers
\cite{huijing1999muscle}.

Recent years saw the rise of new application fields, such as
the study of natural fiber composites \cite{pickering2016review},
and engineering methods to accurately
recover the three-dimensional structure of a material sample, e.g.,
\cite{iizuka2019development,konopczynski2019fully}.

From the first studies on composites, it has been clear that their
properties are strongly dependent on their internal structure: the
volume ratio between each component, the orientation, the shape, all
contribute substantially to the material's properties
\cite{hashin1972theory,hashin1983analysis}. One of the most
wide-spread and significant example of composites is that of Fiber
Reinforced Materials (FRMs), where thin, elongated structures (the
fibers) are immersed in an underlying isotropic material (the elastic
matrix).

We may separate the approaches used to study FRMs into two broad
groups: i) ``\emph{homogenization methods}'', which study a complex
inhomogeneous body by approximating it with a fictitious homogeneous
body that behaves globally in the same way~\cite{zaoui2002continuum},
and ii) ``\emph{fully resolved}'' methods, which use separate
geometrical and constitutive descriptions for the elastic matrix and
the individual fibers.

As examples of analytical ``\emph{homogenization methods}'' we recall
the rule of mixtures~\citep{hashin1965elastic} and the
\emph{empirical} Halpin-Tsai equations~\cite{halpin1976halpin}, used
to study a transversely isotropic unidirectional composite, where
fibers are uniformly distributed and share the same orientation. The
development of homogenization theory led, in recent years, to more
complex models, e.g., \cite{holzapfel2001viscoelastic,
  barrage2019modelling}.

More intricate homogeneization approaches rely on numerical methods to
provide a ``cell'' behaviour, which is then replicated using
periodicity, using, e.g., the Finite Element Method
\cite{adams1967transverse,nakamura1993effects,lu20143d}, Fourier
transforms
\cite{moulinec1998numerical,moulinec2003comparison,eloh2019development},
or Stochastic Methods \cite{lucas2015stochastic}.

The fundamental limit of all ``\emph{homogenization methods}'' approaches
is the impossibility of adapting them to study composites with little regularity.
In these cases, the different phases are typically modeled separately, as a continuum.
This approach began with Pipkin \cite{Pipkin1968} on two dimensional membranes, and
was then expanded to three dimensional examples by others (see, e.g.,
\cite{Kyriacou1996} for a detailed bibliography).

Fully resolved methods allow richer structures, but require
a high numerical resolution, especially when
material phases have different scales. The complex meshing and coupling
often result in an unbearable computational cost,
limiting the use of these methods.

The purpose of this paper is to introduce an approach which is fit for
materials that have \emph{intermediate} properties, i.e., they possess no
particular regularity, and are made by a relatively high number of fiber
components. Similarly to~\cite{radtke2010computational}, our model is inspired
by the Immersed Boundary Method (IBM)~\cite{peskin2002}, and by its variational
counterparts~\cite{boffi2008hyper, Heltai-2008-a, HeltaiCostanzo-2012-a,
  RoyHeltaiCostanzo-2015-a, HeltaiRotundo-2018-a}, where the elastic matrix and
the fibers are modeled independently, and coupled through a non-slip condition.
The model we present can be interepreted as a variation of the embedded
reinforced method (see~\cite{goudarzi2019discrete} and the references therein),
where we aim at providing an efficient numerical method for FRMs that allows
the modeling of complex networks of fibers, where one may also be interested in
the elastic properties of single fibers, without requiring the resolution of
the single fibers in the background elastic matrix. From the computational
point of view, this approach allows the use of two independent discretizations:
one describing the fibers, and one describing the \emph{whole domain}, i.e.,
both the elastic matrix and the fibers. A distributed Lagrange multiplier is
used to couple the independent grids, following the same spirit of the finite
element immersed boundary method~\cite{boffi2003finite, boffi2008hyper},
separating the Cauchy stress of the whole material into a background uniform
behavior and into an \emph{excess} elastic behavior on the fibers.

Section \ref{sec:sec1} introduces the classical fully resolved model
of a collection of fibers immersed in an elastic matrix. For
simplicity, we do not include dissipative terms, and restrict our
study to linearly elastic materials.  The problem is then reformulated
exploiting classical results of mixed methods (see Chapter 4 of
\cite{boffi2013mixed}), following ideas similar to those found in
\cite{boffi2017fictitious}, proving that both the continuous and
discrete formulations we propose are well-posed with a unique
solution.

The use of a full three dimensional model for the fibers still results in high
computational costs; the obvious simplification would be to approximate the
fibers with one-dimensional structures. This approach is non-trivial because
the theoretical solution of the fully resolved variational problem does not
posses enough Sobolev regularity to allow the restriction of a
three-dimensional field to a one-dimensional domain. A possible workaround
involves the use of weighted Sobolev spaces, combinded with graded
meshes~\cite{d2008coupling, d2012finite}. This approach remains unfeasible when
the number of fibers is large. In Section \ref{sec:sec3}, we propose and
analyze an alternative strategy, where under some additional assumptions we
construct a $3D-1D$ coupling that relies on local averaging techniques. A
similar procedure is used in~\cite{heltaimultiscale} to model vascularized
tissues. To conclude, we validate our thin fiber model in Section
\ref{sec:num-exp}, and draw some conclusions in Section~\ref{sec:conclusions}.

\section{Three-dimensional model}
\label{sec:sec1}

\begin{figure}
  \centering
  \includegraphics[width=.5\textwidth]{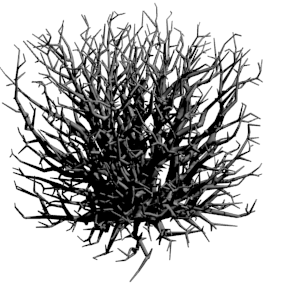}
  \caption{An example of a fiber structure for which the mesh
    generation for the fibers would be trivial, but the resulting
    three-dimensional elastic matrix would be much more expensive to
    resolve in full.}
  \label{fig:fiber-tree}
\end{figure}

Many bi-phasic materials present a relatively simple fiber structure
but result in a very intricate elastic matrix.  Consider, for example,
Figure~\ref{fig:fiber-tree}: constructing a discretization grid for
the fibers themselves maybe simple enough, but building a fully
resolved grid for the \emph{surrounding} elastic matrix, in this case,
may require eccessive resolution, and result in a computationally hard
problem to solve. We wish to describe a new approach, where we
substitute the complex mesh needed for the elastic matrix with a
simple one describing the whole domain, and overlap the fiber
structure independently with respect to the background grid, and
couple the two systems via distributed Lagrange multipliers.

\subsection{Problem formulation}
\label{sec:problem-formulation-3d-3d}

As a model bi-phasic material, we consider a linearly elastic fiber reinforced
material, in the quasi-static small strain regime. We consider continuous
fibers with a perfect bond between the two phases (see~\cite[Chapter
16]{callister2007materials}), leading to a no-slip condition between the fibers
and the elastic matrix. The extension to finite strain elasticity, dynamic
problems, or non-perfect bonds do not present additional difficulties and will
be the subject of future investigations.

To describe the composite, we use a connected, bounded, Lipschitz
domain $\Omega \subset \mathbb{R}^d$ of dimension $d=3$, composed of a
fiber phase $\Omega_f \subset \Omega$, and an elastic matrix
$\hat{\Omega} \coloneqq \Omega \setminus \Omega_f \subset
  \mathbb{R}^d$, which we assume to be a connected, Lipschitz domain. We
describe each of the $n_f \in \mathbb{N}$ fibers with a connected,
Lipschitz domain and $\Omega_f$ is obtained as the union of these
(possibly overlapping) domains.

\begin{figure}[htb]
  \begin{subfigure}{0.32\textwidth}
    \begin{tikzpicture}[scale = 1.2]
      \draw (0, 0) rectangle (3, 2);
      \node [below] at (1.5,0) {$\hat{\Omega}$};
      \foreach \i in {1,...,9}
        {
          \draw[double=none, double distance=3pt, line join=round, line cap=round, draw=black] (0.2,0.2*\i) -- (2.8,0.2*\i);
        }
    \end{tikzpicture}
    \caption*{The elastic matrix}
  \end{subfigure}\hfill
  \begin{subfigure}{0.32\textwidth}%
    \begin{tikzpicture}[scale = 1.2]
      \draw[dotted,opacity=0.1] (0, 0) rectangle (3, 2);
      \node [below] at (1.5,0) {$\Omega_f$};
      \foreach \i in {1,...,9}
        {
          \draw[double distance=0pt, line join=round, line cap=round, draw=black, ultra thick] (0.2,0.2*\i) -- (2.8,0.2*\i);
        }
    \end{tikzpicture}
    \caption*{The fibers}
  \end{subfigure}\hfill
  \begin{subfigure}{0.32\textwidth}%
    \begin{tikzpicture}[scale = 1.2]
      \draw (0, 0) rectangle (3, 2);
      \node [below] at (1.5,0) {$\Omega$};
      \foreach \i in {1,...,9}
        {
          \draw[double distance=0pt, line join=round, line cap=round, draw=black, ultra thick] (0.2,0.2*\i) -- (2.8,0.2*\i);
        }
    \end{tikzpicture}
    \caption*{The material}
  \end{subfigure}
  \caption*{Example of a two-dimensional section of an FRM with uniformly oriented fibers.}
  \label{fig:composite2d}
\end{figure}

\begin{remark}
  The results of this section hold for a general domain
  $\Omega_f$, union of multiple components with the required
  regularity. The property of the
  fibers of being thin, elongated, structures, is only needed for the
  model of Section \ref{sec:sec3}, and plays no role in this section.
\end{remark}

Given a continuous displacement field $u \colon \Omega \rightarrow
\mathbb{R}^d$, representing a deformation from the equilibrium configuration,
the corresponding stress tensor on $\Omega$ can be expressed using the
stress-strain law \cite{Gurtin1981}:
\begin{equation*}
  S[u] = \mathbb{C}\nabla u,
\end{equation*}
where $\mathbb{C}$ is a symmetric $4^{th}$ order tensor that takes the form:
\begin{equation*}
  \mathbb{C} =
  \begin{cases}
    \mathbb{C}_{\Omega} & \qquad \text{in } \hat{\Omega}, \\
    \mathbb{C}_f        & \qquad \text{in } \Omega_f.
  \end{cases}
  \numberthis \label{eqC}
\end{equation*}

Here $\mathbb{C}_{\Omega}$ and $\mathbb{C}_f$ are assumed to be constant over
their respective domains, and represent the elasticity tensors of the elastic
matrix and of the fibers. The assumption of perfect bonds between the fibers
and the elastic matrix (see~\cite[Chapter 16]{callister2007materials}) allows
one to define the full order problem as a single elasticity problem on the
union of the two domains, with pecewise elastic properties. This assumption may
be relaxed by reformulating the fiber elasticity equations and the elastic
matrix elasticity equations separately, and coupling them with appropriate
interface (or bonding) conditions.

The classical formulation of static linear elasticity can be thought of as a
force balance equation (see, for example,~\cite{Gurtin1981}):

\begin{problem}[Classic Strong Formulation]
\label{pb:0}
Given an external force density field $b$, find the displacement $u$
such that
\begin{equation}
  \begin{aligned}
    - \Div (\mathbb{C} u) & = b       &  & \text{ in } \Omega,          \\
    u                     & = 0 \quad &  & \text{ on } \partial \Omega.
  \end{aligned}\label{eq:strong-formulation}
\end{equation}
\end{problem}

Due to the piecewise nature of $\mathbb{C}$, it is natural to
reformulate Problem~\ref{pb:0} into a variational or weak
formulation. Given a subset $D$ of $\partial \Omega$, we introduce the
notation for the subspace of the Sobolev space $H^1(\Omega)$
with functions vanishing
on a subset $D$ of the boundary $\partial \Omega$:
$$
  H^1_{0,D}(\Omega)^d \coloneqq
  \lbrace v \in H^1 (\Omega)^d
  \colon \restr{v}{D} = 0\rbrace,
$$
with norm
$\|\cdot\|_V = \|\cdot\|_{H^1(\Omega)} \coloneqq \|\cdot\|_\Omega +
  \|\nabla \cdot\|_\Omega$, where the symbol $\|\cdot\|_A$ represents
the $L^2(A)$ norm over the measurable set $A \subset \Omega$, and
$(\cdot,\cdot)_A$ represents the $L^2$ scalar product on the given
domain $A$.
We define the following space:
\begin{align*}
  V & \coloneqq \left( H_{0,\partial \Omega}^1 (\Omega) \right)^d
  = \lbrace v \in H^1 (\Omega)^d
  \colon \restr{v}{\partial \Omega} = 0\rbrace,
\end{align*}
The standard weak formulation reads:

\begin{problem}[Classic Weak Formulation]
\label{pb:1}
Given $b\in L^2(\Omega)^d$,
find $u \in V$ such that:
\begin{equation}
  \label{eq:classicweak}
  (\mathbb{C}\nabla u, \nabla v)_\Omega
  =
  (b,v)_\Omega \qquad \forall v\ \in V.
\end{equation}
\end{problem}

The main idea behind our reformulation is to rewrite
Problem~\ref{pb:1} into an equivalent form, where we define two
independent functional spaces. The novelty we introduce is to define
the functional spaces on $\Omega$ and $\Omega_f$, and not on
$\hat{\Omega}$ and $\Omega_f$. To achieved this, we define two
fictitious materials: one with the same properties of the elastic
matrix, occupying the full space $\Omega$, and one describing the
``excess elasticity'' of the fibers separately, defined on $\Omega_f$
only.  The first step in this direction is to split the left-hand side
of Equation~\eqref{eq:classicweak} on the two domains:

\begin{align*}
  (\mathbb{C}\nabla u, \nabla v)_\Omega
   & =
  (\mathbb{C}_f \nabla u, \nabla v)_{\Omega_f} +
  (\mathbb{C}_\Omega \nabla u, \nabla v)_{\hat{\Omega}} \\
   & = (\mathbb{C}_f \nabla u, \nabla v)_{\Omega_f}
  + \underbrace{(\mathbb{C}_\Omega \nabla u, \nabla v)_{\hat{\Omega}}
    + (\mathbb{C}_\Omega\nabla u, \nabla v)_{\Omega_f} }_{\Omega}
  - (\mathbb{C}_\Omega\nabla u, \nabla v)_{\Omega_f}    \\
   & =
  (\mathbb{C}_\Omega \nabla u, \nabla v)_{\Omega} +
  ( \delta \mathbb{C}_f \nabla u, \nabla v)_{\Omega_f},
\end{align*}
where $\delta \mathbb{C}_f \coloneqq \mathbb{C}_f - \restr{(\mathbb{C}_\Omega)}{\Omega_f}$.

For simplicity, we improperly use the expression
``elastic matrix equation'' and ``fiber equation'', even though they
should be really considered as the ``whole domain equation'', and a
``delta fiber equation''.
This formal separation does not change the original variational
problem, which can still be stated explicitly: given
$b\in L^2(\Omega)^d$, find $u \in V$ such that:
\begin{equation}
  \label{eq:ofpb2}
  (\mathbb{C}_\Omega \nabla u, \nabla v)_{\Omega} +
  ( \delta \mathbb{C}_f \nabla u, \nabla v)_{\Omega_f}
  =
  (b,v)_\Omega \qquad \forall v\ \in V.
\end{equation}

The boundary of the domain $\Omega$ induces a natural splitting on the
boundary of the fibers: we define the following partition of
$\partial \Omega_f$:
\begin{align}
  B_i & \coloneqq \partial \Omega_f \setminus \partial \Omega \\
  B_e & \coloneqq \partial \Omega \cap \partial \Omega_f,
  \label{def:B_e}
\end{align}
where $B_i$ is the \textbf{i}nterface between the fibers and the
elastic matrix, while $B_e$ is the interface between the fibers the
\textbf{e}xterior part of $\Omega$, that lies on the boundary
$\partial \Omega$.
Next we define the restriction of $H_0^1(\Omega)^d$ on the fibers:
\begin{align}
  W & \coloneqq \left( H^1_{0,B_e}(\Omega_f) \right)^d,
\end{align}
and separate the solution of Problem \ref{pb:1} in two components,
one describing the whole material, the other describing only the fibers.

When reformulating the problem with two independent variables, the perfect bond
condition between the fibers and the elastic matrix must be imposed via a
volumetric non-slip constraint on the solution $(u,w) \in V \times W$:

\begin{equation} \label{cond:non-slip}
  \restr{u}{\Omega_f} = w.
\end{equation}

The described setting is similar to the distributed Lagrange multiplier method
used to model fluid structure interaction problems with non-matching
discretisations, as described in~\cite{boffi2003finite,
  auricchio2015fictitious, boffi2014mixed}.

The result is a constrained minimization problem:
\begin{equation}
  \label{eq:argmax}
  u,w = \arg \inf_{\substack{(u, v) \in V\times W \\
      u |_{\Omega_f}= w}} \psi(u,w),
\end{equation}
where we defined the total elastic energy of the system as
\begin{equation}
  \label{eq:energy(u,w)}
  \psi(u,w) = \frac{1}{2}(\mathbb{C}_\Omega \nabla u,\nabla u)_\Omega
  + \frac{1}{2}(\delta \mathbb{C}_f \nabla w,\nabla w)_{\Omega_f}
  - (b,u)_\Omega.
\end{equation}

To impose the non-slip constraint of Equation~\eqref{cond:non-slip}, we use the
duality product $W'\times W$ as in \cite{boffi2017fictitious}. We define $Q
  \coloneqq W'$, and enforce the perfect bond on the fibers by asking that
\begin{equation}
  \label{eq:weak-non-slip}
  \langle q, \restr{u}{\Omega_f}-w \rangle_{Q \times W} = 0 \qquad \forall q \in Q.
\end{equation}
For simplicity we will omit the subscript $Q \times W$ from now on.

The constrained minimization expressed in Equation~\eqref{eq:argmax} is
equivalent to the saddle point problem:
\begin{equation}
  \label{eq:argmaxargmin}
  u,w,\lambda = \arg \inf_{\substack{u\in V \\ w\in W}}
  \left( \arg \sup_{\substack{\lambda \in Q}}
  \psi(u,w,\lambda) \right),
\end{equation}
where the constraint is imposed weakly as in~\ref{eq:weak-non-slip}
through a Lagrange multiplier:

\begin{equation*}
  \psi(u,w,\lambda) := \frac{1}{2}(\mathbb{C}_\Omega \nabla u,\nabla u)_\Omega
  + \frac{1}{2}(\delta \mathbb{C}_f \nabla w,\nabla w)_{\Omega_f}
  + \langle \lambda, \restr{u}{\Omega_f}-w\rangle
  - (b,u)_\Omega.
\end{equation*}

A solution to Equation~\eqref{eq:argmaxargmin} is obtained by solving
the Euler-Lagrange equation:
\begin{equation}
  \label{eq:variational1}
  \left\langle D_u \psi,v \right\rangle
  + \left\langle D_w \psi,y \right\rangle
  + \left\langle D_{\lambda} \psi,q \right\rangle
  = 0 \qquad \forall\ v \in V,\ \forall\ y \in W,\forall\ q \in Q,
\end{equation}
that is:
\begin{problem}[Saddle Point Weak Formulation]
\label{pb:4}
Given $b \in L^2(\Omega)^d$,
find $u \in V, w \in W, \lambda \in Q$ such that:
\begin{align}
  \label{eq:WeakForm2}
  (\mathbb{C}_\Omega\nabla u, \nabla v)_\Omega
  +  \langle \lambda,  \restr{v}{\Omega_f} \rangle & =
  (b,v)_\Omega \qquad                              & \forall v \in V                    \\
  \label{eq:WeakForm2.2}
  (\delta \mathbb{C}_f \nabla w, \nabla y)_{\Omega_f}
  - \langle \lambda, y \rangle                     & = 0 \qquad      & \forall y \in W  \\
  \langle q,  \restr{u}{\Omega_f} - w  \rangle     & = 0 \qquad      & \forall q \in Q,
\end{align}
or, equivalently,
\begin{equation}
  \begin{aligned}
     &  & \mathcal{K}_{\Omega}u &  & \quad           &  & +\mathcal{B}^T \lambda &  & = (b,\cdot)_\Omega \qquad &  & \text{ in } V'  \\
     &  & \qquad                &  & \mathcal{K}_f w &  & -\mathcal{M}^T \lambda &  & =  0 \qquad \qquad        &  & \text{ in } W'  \\
     &  & \mathcal{B}u          &  & -\mathcal{M}w   &  & \qquad                 &  & = 0 \qquad \qquad         &  & \text{ in } Q',
  \end{aligned}
\end{equation}
where
\begin{equation}
  \begin{aligned}
    \mathcal{K}_{\Omega} & \colon V \rightarrow V' \qquad
                         & \langle \mathcal{K}_{\Omega} u,\cdot\rangle
                         & \coloneqq (\mathbb{C}_{\Omega}\nabla u, \nabla \cdot)_\Omega                 \\
    \mathcal{K}_f        & \colon W \rightarrow W' \qquad
                         & \langle \mathcal{K}_f w,\cdot\rangle
                         & \coloneqq (\delta \mathbb{C}_f\nabla w, \nabla \cdot)_{\Omega_f}             \\
    \mathcal{B}          & \colon V \rightarrow Q' \qquad
                         & \langle \mathcal{B}u,	\cdot\rangle                                & \coloneqq
    \langle \cdot, \restr{u}{\Omega_f} \rangle                                                          \\
    \mathcal{M}          & \colon W \rightarrow Q' \qquad
                         & \langle \mathcal{M}w \cdot\rangle                                & \coloneqq
    \langle \cdot, w \rangle.
  \end{aligned}
  \numberthis \label{operatorList}
\end{equation}
\end{problem}

\subsection{Well-posedness, existence and uniqueness}
\label{subsection:well-posed_3d3d}

Existence and uniqueness follow from standard saddle point
theory~\cite{boffi2013mixed}. We introduce the product Hilbert space, with
its norm:
\begin{align*}
  \mathbb{V}               & \coloneqq V \times W,            \\
  \|(u,w)\|^2_{\mathbb{V}} & \coloneqq \|u\|_V^2 + \|w\|_W^2,
\end{align*}
and we indicate with $\pmb{u} \coloneqq (u,w), \pmb{v} \coloneqq (v,y)$
the elements of $\mathbb{V}$. We define the bilinear forms
\begin{align*}
   & \begin{aligned}
    \mathbb{F} \colon & \mathbb{V} \times \mathbb{V} \longrightarrow \mathbb{R} \\
                      & (\pmb{u},\pmb{v})  \longmapsto
    \langle \mathcal{K}_{\Omega}u,v\rangle + \langle \mathcal{K}_f w,y\rangle = (\mathbb{C}_{\Omega}\nabla u, \nabla v)_\Omega +
    (\delta \mathbb{C}_f \nabla w, \nabla y)_{\Omega_f},
  \end{aligned} \\
   & \begin{aligned}
    \mathbb{E} \colon & \mathbb{V} \times Q \longrightarrow \mathbb{R}                                         \\
                      & (\pmb{u},q)  \longmapsto \langle \mathcal{B}u,q\rangle - \langle \mathcal{M}w,q\rangle
    = \langle q,  \restr{v}{\Omega_f}-w \rangle,
  \end{aligned}
\end{align*}
with which we reformulate the problem as: find $\pmb{u} \in \mathbb{V}, \lambda \in Q$ such that
\begin{equation}
  \begin{aligned}
    \mathbb{F}(\pmb{u},\pmb{v}) + \mathbb{E}(\pmb{v},\lambda) & = (b,v)_\Omega
    \qquad                                                    & \forall\ \pmb{v} \coloneqq (v,y) \in \mathbb{V},                     \\
    \mathbb{E}(\pmb{u},q)                                     & = 0                                              & \forall\ q \in Q.
  \end{aligned}
  \label{weakFE}
\end{equation}

\begin{proposition}
  \label{prop:inf-supE}
  There exists a constant $\alpha_1 > 0$ such that:
  $$
    \inf_{q \in Q} \sup_{(v,w) \in \mathbb{V}}
    \frac{ \langle q, \restr{v}{\Omega_f}-w \rangle}
    { \|(v,w)\|_{\mathbb{V}} ~\|q\|_{Q}}
    \geq \alpha_1.
  $$
  Moreover $\alpha_1 = 1$.
\end{proposition}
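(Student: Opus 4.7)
The plan is to exhibit, for each $q \in Q$, a single test pair $(v,w) \in \mathbb{V}$ that already saturates the quotient, so the supremum is bounded below by $\|q\|_Q$. The obvious candidate is to set $v = 0$, which simultaneously eliminates the restriction term $v|_{\Omega_f}$ (which is the only place where the cross-domain coupling is delicate) and makes the denominator collapse to $\|w\|_W$.

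With $v=0$ the numerator becomes $\langle q, -w\rangle$ and the norm of the pair is just $\|w\|_W$, so
\begin{equation*}
\sup_{(v,w)\in \mathbb{V}} \frac{\langle q, v|_{\Omega_f}-w\rangle}{\|(v,w)\|_{\mathbb{V}}}
\;\ge\; \sup_{w\in W} \frac{\langle q, -w\rangle}{\|w\|_{W}}
\;=\; \sup_{w\in W}\frac{\langle q,w\rangle}{\|w\|_{W}}
\;=\; \|q\|_{W'},
\end{equation*}
where the last equality is simply the definition of the operator norm on $W'$. Since $Q = H^{-1}(\Omega_f)^d$ is identified with $W'$ through the chosen duality pairing, the right-hand side equals $\|q\|_{Q}$, and taking the infimum over $q \in Q\setminus\{0\}$ gives the bound with $\alpha_1 = 1$.

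To argue that $\alpha_1=1$ is sharp (as claimed in the statement), I would then observe the reverse inequality: for any $(v,w)\in\mathbb{V}$, the map $w \mapsto w - v|_{\Omega_f}$ sends $\mathbb{V}$ continuously into $W$ with norm at most one, since $\|v|_{\Omega_f} - w\|_W^2 \le 2(\|v|_{\Omega_f}\|_W^2 + \|w\|_W^2)$ and a careful use of the fact that the $W$-norm is the restriction of the $V$-norm to $\Omega_f$ shows the Lipschitz constant is exactly $1$. This yields $\langle q, v|_{\Omega_f}-w\rangle \le \|q\|_Q\, \|(v,w)\|_\mathbb{V}$, matching the lower bound.

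There is no real obstacle here: the inf-sup is essentially built into the definition of the Lagrange multiplier space as the dual of $W$, and the choice $v=0$ is what makes the constant come out to exactly $1$ rather than a smaller number. The only point that requires care is consistency of the duality bracket when $v$ is nonzero (since $v|_{\Omega_f}$ lives in $H^{1/2}$-type trace spaces rather than in $W$), but this is precisely avoided by the $v=0$ reduction.
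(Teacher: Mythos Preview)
Your core argument---take $v=0$ so that the supremum over $(v,w)$ dominates $\sup_{w\in W}\langle q,w\rangle/\|w\|_W=\|q\|_{W'}=\|q\|_Q$---is precisely the paper's proof.

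Two side remarks in your write-up are off, though they do not affect the result. First, the sharpness paragraph is unnecessary: the statement only claims that $\alpha_1=1$ \emph{works}, not that it is optimal, and your assertion that the map $(v,w)\mapsto v|_{\Omega_f}-w$ has operator norm exactly~$1$ is not substantiated (the triangle inequality together with $\|v|_{\Omega_f}\|_W\le\|v\|_V$ only gives $\sqrt{2}$). Second, $v|_{\Omega_f}$ is the restriction of an $H^1(\Omega)$ function to the full-dimensional subdomain $\Omega_f$, not a trace onto a lower-dimensional set; it lies in $H^1(\Omega_f)$ directly, so there is no $H^{1/2}$-type obstruction to worry about even when $v\neq 0$.
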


The proof for this proposition, and its discrete version, are variations
on the one found in \cite{boffi2017fictitious}.

\begin{proof}

  The non slip condition is given by the duality pairing between $Q = W'$ and $W$;
  by definition of the norm in the dual space $Q$:
  \begin{align*}
    \|q\|_{Q}
     & = \sup_{ w \in W }  \frac{ \langle q , w \rangle}{\| w\|_{Q}} \\
     & \leq \sup_{v \in V, w \in W }
    \frac{\langle q ,  \restr{v}{\Omega_f} - w \rangle}
    {(\|w\|_{Q}^2+\|v\|_V^2)^{\frac{1}{2}}},
  \end{align*}
  where the last inequality can be proven fixing $v=0$.
  The final
  statement is found dividing by $\|q\|_{Q}$ and taking the $\inf_{q \in Q}$.
\end{proof}
Notice that $\alpha_1 = 1$, and does not depend on the two domains or on the two
spaces.

\begin{proposition}
  \label{prop:inf-supF}
  Assume $\mathbb{C}_{\Omega}$ and $\mathbb{C}_f$ to be strongly elliptic with
  constants $c_\Omega$ and $c_f$ respectively, such that $c_f > c_\Omega >0$;
  then there exists a constant $\alpha_2 > 0$ such that:
  $$
    \inf_{(u,w) \in \ker{\mathbb{E}}}
    \sup_{(v,y) \in \ker{\mathbb{E}}}
    \frac{(\mathbb{C}_\Omega \nabla u, \nabla v)_{\Omega}
    + (\delta \mathbb{C}_f\nabla w, \nabla y)_{\Omega_f}}
    { \|(v,y)\|_{\mathbb{V}} \|(u,w)\|_{\mathbb{V}}}
    \geq \alpha_2.
  $$
\end{proposition}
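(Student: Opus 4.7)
The plan is to prove inf-sup by showing coercivity on the kernel, which then yields the inf-sup with the diagonal choice $\pmb v = \pmb u$. The essential observation is that Equation~\ref{eq:fromH1toL2} lets us identify elements of $\ker \mathbb{E}$ with pairs $(u,w)$ satisfying $w = \restr{u}{\Omega_f}$ almost everywhere, so the ``fiber'' component is fully determined by the ``background'' one.

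First I would test $\mathbb{F}$ against $(v,y) = (u,w)$ for an arbitrary $(u,w) \in \ker \mathbb{E}$. Using $w = \restr{u}{\Omega_f}$ a.e., the two quadratic contributions combine nicely:
\begin{equation*}
\mathbb{F}((u,w),(u,w)) = (\mathbb{C}_\Omega \nabla u,\nabla u)_{\hat\Omega} + (\mathbb{C}_\Omega \nabla u,\nabla u)_{\Omega_f} + (\delta\mathbb{C}_f \nabla u,\nabla u)_{\Omega_f} = (\mathbb{C}_\Omega \nabla u,\nabla u)_{\hat\Omega} + (\mathbb{C}_f \nabla u,\nabla u)_{\Omega_f},
\end{equation*}
so that the cross term involving $\delta \mathbb{C}_f$ disappears by construction. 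This is the key algebraic step: we never need $\delta\mathbb{C}_f$ itself to be positive definite, only that the two original tensors are strongly elliptic. Applying strong ellipticity on each subdomain yields a lower bound of $\min(c_\Omega,c_f)\|\nabla u\|_\Omega^2 = c_\Omega \|\nabla u\|_\Omega^2$ (using the hypothesis $c_f > c_\Omega$).

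Next I would pass from $\|\nabla u\|_\Omega$ to the full $\mathbb{V}$-norm. Since $V = H^1_{0,\partial\Omega}(\Omega)^d$, the Poincar\'e inequality gives $\|u\|_V \le C_P \|\nabla u\|_\Omega$. For the fiber component, continuity of the restriction operator $H^1(\Omega)^d \to H^1(\Omega_f)^d$ together with $w = \restr{u}{\Omega_f}$ yields $\|w\|_W \le C_R \|u\|_V$. Combining these,
\begin{equation*}
\|(u,w)\|_{\mathbb{V}}^2 = \|u\|_V^2 + \|w\|_W^2 \le (1+C_R^2)\|u\|_V^2 \le (1+C_R^2)C_P^2 \|\nabla u\|_\Omega^2,
\end{equation*}
so $\mathbb{F}((u,w),(u,w)) \ge \alpha_2 \|(u,w)\|_{\mathbb{V}}^2$ for an explicit $\alpha_2 > 0$. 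The inf-sup then follows by choosing $\pmb v = \pmb u$ in the supremum, which is legitimate because $\pmb u \in \ker\mathbb{E}$.

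I do not expect a real obstacle: the only subtlety is to resist the temptation to bound $(\delta\mathbb{C}_f \nabla w,\nabla w)_{\Omega_f}$ directly (which would force an additional assumption on $\delta\mathbb{C}_f$) and instead exploit the kernel constraint to re-assemble $\mathbb{C}_\Omega$ and $\delta\mathbb{C}_f$ into the original $\mathbb{C}_f$ on $\Omega_f$. The rest is a routine application of Poincar\'e and trace/restriction continuity, and the assumption $c_f > c_\Omega$ only enters when we collapse $\min(c_\Omega,c_f)$ to $c_\Omega$ in the final constant.
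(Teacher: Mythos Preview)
Your proof is correct but follows a different route from the paper's. The paper bounds the two quadratic terms separately: it asserts that $\delta\mathbb{C}_f$ is itself strongly elliptic with constant $c_f - c_\Omega$ (presented as an ``immediate consequence of the hypotheses''), applies this together with the ellipticity of $\mathbb{C}_\Omega$ on all of $\Omega$, and arrives at $\alpha_2 = \tfrac{c_p}{2}\min(c_\Omega,\, c_f - c_\Omega)$. You instead exploit the kernel constraint to recombine $\mathbb{C}_\Omega + \delta\mathbb{C}_f = \mathbb{C}_f$ on $\Omega_f$, and then invoke the ellipticity of the two \emph{original} tensors on the complementary subdomains $\hat\Omega$ and $\Omega_f$. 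Your argument is arguably cleaner: it sidesteps the claim that $\delta\mathbb{C}_f$ is elliptic with constant $c_f - c_\Omega$ (a claim that does not follow from the stated hypotheses for general symmetric tensors, since ellipticity lower bounds do not subtract), and it yields the sharper ellipticity constant $\min(c_\Omega,c_f)=c_\Omega$ in place of $\min(c_\Omega,\, c_f - c_\Omega)$. As you note, the ordering $c_f > c_\Omega$ is essentially cosmetic in your version, whereas the paper's proof genuinely relies on positivity of $\delta\mathbb{C}_f$.
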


\begin{proof}
  An immediate consequence of the hypotheses is
  that $\delta \mathbb{C}_f$ is elliptic of constant $c_f - c_\Omega$.
  Following the proof for a similar statement found in
  \cite{auricchio2015fictitious, boffi2014mixed}, given an element $(v,y)\in \ker(\mathbb{E})$,
  the fact that $\restr{v}{\Omega_a} = y$ allows to use the Poincar\'e inequality
  on $v$ to control the norm of $y$;
  for every $(u,w) \in \ker(\mathbb{E})$:
  \begin{align*}
    \sup_{(v,y) \in \ker(\mathbb{E})}
     & \frac{
    (\mathbb{C}_\Omega \nabla u, \nabla v)_\Omega
    + (\delta \mathbb{C}_f\nabla w, \nabla y)_{\Omega_f}
    }
    {\|(v,y)\|_{\mathbb{V}}}                 \\
     & \geq
    \frac{
    c_\Omega  (\nabla u,\nabla u)_{\Omega}
    + (c_f - c_\Omega ) (\nabla w,\nabla w)_{\Omega_f}
    }
    {\|(u,w)\|_{\mathbb{V}}}                 \\
     & \geq
    \frac{c_p
      \min(c_\Omega,c_f - c_\Omega)
    }{2}
    \frac{
    (u, u)_{H^1(\Omega)}
    + (w, w)_{H^1(\Omega_f)}
    }
    {\|(u,w)\|_{\mathbb{V}}}                 \\
     & \geq \alpha_2 \|(u,w)\|_{\mathbb{V}},
  \end{align*}
where we used the Poincar\'e inequality with its positive constant $c_p$ on $u \in V$,
  with $\alpha_2~\coloneqq~\frac{c_p
      \min(c_\Omega,c_f - c_\Omega)
    }{2}$, and we used the scalar product of $H^1(\Omega)^d$:
  $$
    (u, u)_{H^1(\Omega)} \coloneqq (u, v)_{\Omega} +(\nabla u, \nabla v)_{\Omega},
  $$
  and the analogous one for $H^1(\Omega_f)^d$.
  The final statement
  is obtained dividing by $\|(u,w)\|_{\mathbb{V}}$ and considering the
  $\inf_{(u,w) \in \ker{\mathbb{E}}}$.
\end{proof}

\begin{remark}
  This paper does not intend to focus on the \emph{choice}
  of elastic tensors.
  Strong ellipticity is a common property
  among them, and holds
  in the case of linearly elastic materials
  (see e.g. \cite{marsden1994mathematical}): let $u \in V$, $w \in W$
  \begin{align}
    \label{eq:elasticity_tens}
    \mathbb{C}_\Omega \nabla u   & := 2\mu_\Omega Eu + \lambda_\Omega (tr \nabla u) I = 2\mu_\Omega Eu + \lambda_\Omega (\Div u) I \numberthis \\
    \mathbb{C}_f \nabla w        & := 2\mu_f Ew + \lambda_f (tr \nabla w) I = 2\mu_f Ew + \lambda_f (\Div w)I                                  \\
    \label{eq:elasticity_tensf}
    \delta \mathbb{C}_f \nabla w & := 2(\mu_f - \mu_\Omega) Ew + (\lambda_f - \lambda_\Omega) (tr \nabla w) I                                  \\
                                 & = 2\mu_\delta Ew + \lambda_\delta (\Div
    w) I, \nonumber
  \end{align}
  where $\mu_\delta \coloneqq \mu_f - \mu_\Omega$ and $\lambda_\delta \coloneqq \lambda_f - \lambda_\Omega$,
   and $Eu = \frac{\nabla u + \nabla u^T}{2}$ is the symmetric gradient.
  These are the elastic tensors we use in Section~\ref{sec:num-exp},
  for our numerical tests.
\end{remark}

Propositions \ref{prop:inf-supF} and \ref{prop:inf-supE} imply that the inf-sup
conditions are satisfied, and that Problem~\ref{pb:4} is well-posed, and has a
unique solution.

\subsection{Finite element discretization}
\label{subsec:FEDiscretization}

The distributed Lagrange formulation of Problem~\ref{pb:4} makes it possible to
use independent triangulations for its numerical solution. Consider the family
$\mathcal{T}_h(\Omega)$ of regular meshes in $\Omega$, and a family
$\mathcal{T}_h(\Omega_f)$ of regular meshes in $\Omega_f$, where we denote by
$h$ the maximum diameter of the elements of the two triangulations. We assume
that no geometrical error is committed when meshing, i.e., $\Omega =
  \bigcup_{T_h \in \mathcal{T}_h(\Omega)} T_h $, and $\Omega_f = \bigcup_{S_h \in
    \mathcal{T}_h(\Omega_f)} S_h $. We consider two independent finite element
spaces $V_h\subset V$, and $W_h\subset W$, and we take $Q_h = W_h$. All finite
dimensional spaces are endowed with the norms of their continuous counterparts.

\begin{problem}[Discrete Weak Formulation]
\label{problem:weak3d3d_h}
Given $b \in L^2(\Omega)^d$, find $u_h \in V_h,~w_h \in W_h, ~\lambda_h \in Q_h$ such that:
\begin{align}
  (\mathbb{C}_\Omega\nabla u_h, \nabla v_h)_\Omega
  + \langle \lambda_h, \restr{v_h}{\Omega_f}\rangle & =
  (b,v_h)_\Omega \qquad               & \forall v_h \in V_h,                        \\
  (\delta \mathbb{C}_{f} \nabla w_h, \nabla y_h)_{\Omega_f}
  - \langle\lambda_h, y_h\rangle      & = 0 \qquad           & \forall y_h \in W_h, \\
  \langle q_h, \restr{u_h}{\Omega_f} - w_h\rangle   & = 0 \qquad           & \forall q_h \in Q_h.
\end{align}
\end{problem}
Existence, well posedness, and convergence are guaranteed if the
classical discrete inf-sup conditions are satisfied:

\begin{proposition}
  \label{prop:inf-supEh}
Assume that the $L^2$ projection
$$
  P_W \colon W \rightarrow W_h \subset L^2(\Omega_f)
$$
is continuous and $H^1$-stable, i.e., there exists a positive constant $c_W$ such
that for all $w \in W$:
\begin{equation}
  \label{eq:h1-stable}
  \|\nabla (P_W w) \|_{\Omega_f} \leq c_W \|\nabla w\|_{\Omega_f}.
\end{equation}
  Then there exists a constant $\alpha_{3} > 0$, independent of $h$, such that:
  $$
    \inf_{q_h \in Q_h} \sup_{(v_h,w_h) \in \mathbb{V}_h}
    \frac{\langle q_h, \restr{v_h}{\Omega_f}-w_h\rangle}
    { \|(v_h,w_h)\|_{\mathbb{V}} \|q_h\|_{Q}}
    \geq \alpha_{3}.
  $$
\end{proposition}

\begin{proof}
  For every $q_h \in Q_h$,
  by definition of the $Q$ norm, and by property~\eqref{eq:h1-stable},
  there exists $\hat{w} \in W$ such that:
  \begin{equation}
    \label{eq:proof_infsup}
    \| q_h \|_Q
    = \sup_{ w \in W }  \frac{\langle q_h , w \rangle}{\| w\|_{W}}
    = \frac{\langle q_h , \hat{w} \rangle}{\| \hat{w}\|_{W}}
    = \frac{\langle q_h , P_W \hat{w} \rangle}{\| \hat{w}\|_{W}}
    \leq c_W
    \frac{\langle q_h , P_W \hat{w} \rangle}{\| P_W \hat{w}\|_{W}}.
  \end{equation}
  Therefore,
  \begin{align*}
    \| q_h \|_Q
     & \leq c_W \frac{\langle q_h , P_W \hat{w} \rangle}{\| P_W \hat{w}\|_{W}}         
     \leq c_W \sup_{ w_h \in W_h } \frac{\langle q_h , w_h \rangle}{\| w_h\|_{W}}\\
    & \leq c_W \sup_{ v_h \in V_h, w_h \in W_h }
    \frac{\langle q_h , \restr{v_h}{\Omega_f} - w_h \rangle}{(\|w_h\|_{W}^2+\|v_h\|_V^2)^{\frac{1}{2}}},
  \end{align*}
  and we conclude as in Proposition \ref{prop:inf-supE}.
\end{proof}

\begin{proposition}
  \label{prop:inf-supFh}
  Assume that $\mathbb{C}_{\Omega}$ and $\mathbb{C}_f$ are strongly elliptic
  with constants $c_\Omega$ and $c_f$ respectively, such that $c_f > c_\Omega >  0$;
  then there exists a constant $\alpha_{4} > 0$, independent of $h$, such that:
  $$
    \inf_{(u_h,w_h) \in \ker(\mathbb{E}_h)}
    \sup_{(v_h,y_h) \in \ker(\mathbb{E}_h)}
    \frac{(\mathbb{C}_\Omega \nabla v_h, \nabla u_h)_{\Omega}
    + (\delta \mathbb{C}_f\nabla y_h, \nabla w_h)_{\Omega_f}}
    { \|(v_h,y_h)\|_{\mathbb{V}} \|(u_h,w_h)\|_{\mathbb{V}}}
    \geq \alpha_{4},
  $$
  where 
  \begin{align*}
    \ker(\mathbb{E}_h) \coloneqq & \left\lbrace \pmb v_h \coloneqq (v_h,w_h) \in \mathbb{V}_h\colon
    \langle q_h,\restr{v_h}{\Omega_f} - w_h \rangle = 0 \qquad \forall q_h \in Q_h \right\rbrace.
  \end{align*}  
\end{proposition}
The proof follows the one of Proposition \ref{prop:inf-supF}.

\paragraph{Error estimate}

Proposition \ref{prop:inf-supE} and \ref{prop:inf-supF} allow us
to apply the theory from Chapter $5$ of \cite{boffi2013mixed}, obtaining the following
error estimate:

\begin{theorem}
\label{th:err_3d3d}
  Consider $\mathbb{C}_\Omega$ and $\mathbb{C}_f$, elastic stress tensors
  satisfying the hypothesis of Proposition \ref{prop:inf-supF},
  the domains $\Omega$ and $\Omega_f$ with the regularity required in
  Section \ref{sec:sec1},
  and $b \in L^2(\Omega)^d$.
  Then the following error estimate holds for $(u,w,\lambda)$, solution to Problem \ref{pb:4},
  and  $(u_h,w_h,\lambda_h)$, solution of Problem \ref{problem:weak3d3d_h}:
  \begin{equation}
    \label{eq:sol_error3d3d}
    \begin{aligned}
      \| u - u_h \|_V
       & + \| w - w_h \|_W
      + \| \lambda - \lambda_h \|_Q
      \leq                 \\
      C_e
       & \left(
      \inf_{v_h \in V_h}  \| u - v_h \|_V
      + \inf_{y_h \in W_h}  \| w - y_h \|_W
      + \inf_{q_h \in Q_h}  \| \lambda - \lambda_h \|_Q
      \right),
    \end{aligned}
  \end{equation}
  where $C_e > 0$, and depends on $\alpha_3,\alpha_4,c_\Omega,c_f$ and the norm of the operators
  $\mathcal{K}_\Omega$ and $\mathcal{K}_f$.
\end{theorem}

We remark how the constant $C_e$
is affected by the coupling between the two meshes; as intuition suggests, the quality
of the solution does not depend only the on the ability of $V$ and $W$ to individually
describe it, but also on the coupling between them.

\paragraph{Non-matching meshes}

One of the basic assumptions made in the continuous case is that, for every element
$v \in V$, we have that $\restr{v}{\Omega_f} \in W$; similarly every element $w \in W$
can be extended to an element of $V$.

With an independent discretization of the two meshes, the inclusion $W_h
  \subset V_h$ is in general false. In this case, the two requirements for a good
approximation of the problem are that the projection $P_W \colon W
  \rightarrow W_h$ is $H^1$-stable, and that the kernel of $\mathbb{E}_h$ is
``rich enough''.

The $H^1$ stability condition can be easily obtained by inverse inequalities on
quasi-uniform meshes. For more general meshes, the stability of the $L^2$
projection has been investigated, for example, in~\cite{Crouzeix2001,
bramble2002stability,Bank2013}. The non-matching nature of the discretization
does not deteriorate the approximation properties of the underlying spaces,
provided that the two discretizations have comparable local mesh sizes.

In particular, the triangle inequality and Bramble-Hilbert lemma imply readily
that for any $u$ in $V$ we can write
\begin{equation}
  \begin{split}
    \left\|\restr{u}{\Omega_f} -
    P_W\left(\restr{(P_V u)}{\Omega_f}\right)\right\|_{\Omega_f}
    \leq  &
    \left\|\restr{u}{\Omega_f} - \restr{(P_V u)}{\Omega_f} \right\|_{\Omega_f} + \\
    &
    \left\|\restr{(P_V u)}{\Omega_f} - P_W\left(\restr{(P_V u)}{\Omega_f}\right) \right\|_{\Omega_f} \\
    \leq  & C_V h_V  \left|\restr{u}{\Omega_f}\right|_{H^1(\Omega_f)} +
    C_W h_W \left|\restr{(P_V u)}{\Omega_f}\right|_{H^1(\Omega_f)} ,
  \end{split}
\end{equation}
i.e., even with non-matching meshes, the passage through the two non-matching
discretizations still allows one to control in a straight forward way the
$L^2$ norm of the error for any $H^1$ function on $\Omega$. Provided that the 
discretizations on $\Omega$ and $\Omega_f$ have comparable
mesh sizes (respectively $h_V$ and $h_W$ in the equation above), then the passage
through non-matching meshes keeps the error in the same order.

Moreover, if we integrate exactly on the non-matching grids, it is possible to guarantee
that globally constant and linear functions are included in the kernel,
ensuring that  $\ker (\mathbb{E}_h)~\neq~\lbrace (0,0) \rbrace$.

\section{Thin fibers}
\label{sec:sec3}

The computational cost of discretizing numerous three-dimensional
fibers might render Problem~\ref{problem:weak3d3d_h} too computationally
demanding: a possible simplification is to approximate the fibers with
one-dimensional structures.
This construction is a non-trivial because the restriction (or trace) of a
three-dimensional function to a one-dimensional domain is not well defined in
the $H^1$ Sobolev space, which is the natural space where the elasticity
problem is well posed.

Instead of resorting to weighted Sobolev spaces and graded meshes, as
done in \cite{d2008coupling, d2012finite}, we define the coupling between 
the one-dimensional fibers and the three-dimensional elastic matrix through 
an averaging technique that renders the problem well posed.

To simplify the exposition, we consider a single fiber with constant radius
$a$; the same results hold for a finite collection of fibers. Given a
one-dimensional curve $\Gamma$ immersed in $\mathbb{R}^3$, we call \emph{fiber}
its tubular extension of radius $a>0$: $$ \Omega_f \coloneqq \lbrace x \in
\mathbb{R}^3 \colon \operatorname{dist}(x,\Gamma) \leq a \rbrace, $$ such that
$\partial \Omega_f$ is non-intersecting.

Given a point $x$ on $\Gamma$, we call $D_a(x)$ the disk of radius $a$
orthogonal to $\Gamma$, and we assume that the fibers can be written as the
image of a diffeomorphism
\begin{equation}
\label{eq_Phi_transform}
  \Phi \colon \Omega_a \rightarrow \Omega_f,
\end{equation}
where $\Omega_a$ is a straight cylinder of radius $a$ and of axis $\Upsilon$ aligned with the coordinate $x_1$, and $\Phi$ satisfies the following hypotheses:
\begin{enumerate}[i)]
  \item $\Phi(\Omega_a) = \Omega_f$
  \item $\Phi \left( \Upsilon \right) = \Gamma$
  \item  $D_{a}(\Phi(x)) = \Phi\left(D_a(x) \right)$, for every $x \in \Upsilon$.
\end{enumerate}

Given two vectors $u,v \in \mathbb{R}^n$, their tensor product is
a matrix of size $n \times n$, denoted as $u \otimes v$;  
defined for each index $i, j \in \mathbb{N}$, $1 \leq i, j \leq n$ as:
$$
(u \otimes v)_{i,j} \coloneqq u_i v_j.
$$

Let $W \coloneqq H^1(\Omega_f)^d$, and $W_\Gamma \coloneqq H^1(\Gamma)^d$. For $w \in W_\Gamma$, the surface gradient along the curve is defined as:
\begin{equation}
  \label{def:derivative_on_curve}
  \nabla_{\Gamma} w
  \coloneqq
  t \otimes  t \nabla \varphi_w,
\end{equation}
where $t$ is the unitary tangent vector
to the curve $\Gamma$ and $\varphi_w$ is a smooth extension of $w$ in a tubular neighborhood of $\Gamma$.

The following result can be used to define the coupling between the three
dimensional elastic matrix and the one dimensional fibers: 

\begin{theorem}
  \label{proposition:ineq_for_infsup}

  Let $\Omega_f$ be a fiber of radius $a$ with center line $\Gamma$. 
  
  The operator $\restrW \colon W \rightarrow W_\Gamma$, defined as
  \begin{equation}
    \label{eq:avg_slip}
    \restrW u (x)
    \coloneqq \frac{1}{|D_a(0)|} \int_{D_a(x)} u(y) dD_y, \qquad x \in \Gamma
  \end{equation}
  is bounded and continuous.
  
\end{theorem}

\begin{proof}

  We start by considering straight cylinders, and smooth functions. Let $u \in C^\infty( \Omega_a)$;
  the following inequalities hold:
  \begin{align}
      \label{eq:cinfty-l2}
    \| \restrW u \|_{\Upsilon} 
    & \leq \frac{1}{\sqrt{\pi} a} \| u \|_{\Omega_a}          \\
    \label{eq:cinfty-h1}
    \|\nabla_{\Upsilon} \restrW u \|_{\Upsilon}
    & \leq \frac{1}{\sqrt{\pi} a^2} \| \nabla u \|_{\Omega_a},
  \end{align}
  where $\Omega_a$ is a straight cylinder of axis $\Upsilon$.

Consider the coordinates $(x_1, x_2, x_3)$, where $x_1$ is aligned with the
cylinder's axis and $x_2, x_3$ are normal to the axis. The first inequality can
be proven directly, using either Jensen's or H\"older inequality:
\begin{align*}
  \| \restrW u \|_{\Upsilon} ^2 & = \frac{1}{(\pi a^2)^2}
  \int_{\Upsilon}  \left( \int_{D_a(x_1)} u(x_1,x_2,x_3) dx_2 dx_3 \right)^2 dx_1                                  \\
                          & \leq \frac{1}{\pi a^2} \int_{\Upsilon}  \int_{D_a(x_1)} u(x_1,x_2,x_3)^2 dx_2 dx_3 dx_1
  = \frac{1}{\pi a^2} \| u \|_{\Omega_a}^2
\end{align*}

The second inequality can be proven in a similar fashion, by splitting the
integral used to compute $\restrW u(x)$ on $\Gamma$ and on the domain $D
\coloneqq \lbrace x_2,x_3 \text{ s.t. } x_2^2+x_3^2 \leq a^2 \rbrace$, which
does not depend on $x_1$, and observing that $|\nabla_{\Upsilon} \restrW
u(x)|^2 = |\partial_{x_1} \restrW u (x)|^2$.

From~\eqref{eq:cinfty-l2} and \eqref{eq:cinfty-h1}, we 
conclude with a density argument that 
  $$
    u \in H^1(\Omega_a)^d
    \Rightarrow
    \restrW u\in H^1(\Upsilon)^d.
  $$

The generalization to a generic fiber $\Omega_f = \Phi(\Omega_a)$ follows 
applying a change of coordinate transformation through $\Phi$. In particular, 
for any $u \in H^1(\Omega_a)$:
  \begin{align}
    \| u \circ \Phi \|_{H^1(\Omega_a)}        & \leq C_J^{1/2} C_\Phi
    \| u \|_{H^1(\Omega_f)}                               \\
    \| \restrW u \|_{H^1( \Gamma )} & \leq
    (C_J C_\Phi)^{3/2} \| \restrW (u \circ \Phi ) \|_{H^1(\Upsilon)},
  \end{align}
  where we defined the following positive constants:
  $$
    C_J \coloneqq \sup_{z \in \Omega_f} |J\Phi^{-1}(z)|
    \qquad
    C_\Phi \coloneqq \sup_{z \in \Omega_f} |\nabla \Phi(z)|
    .
  $$

  For the first inequality, we begin
  with the $L^2$ part of the norm:
  $$
    \| u \circ \Phi \|_{\Omega_a}^2
    =
    \int_{\Omega_a} (u \circ \Phi)^2 d\Omega_a
    =
    \int_{\Omega_f} u^2 |J\Phi^{-1}| d\Omega_f
    \leq C_J \| u \|_{\Omega_f}^2.
  $$
  Similarly, for $\| \nabla (u \circ \Phi) \|_{\Omega_a}^2$:
  \begin{align*}
    \| \nabla (u \circ \Phi) \|_{\Omega_a}^2
     & =
    \int_{\Omega_a} (\nabla(u \circ \Phi)(z))^2 d\Omega_a         \\
     & =
    \int_{\Omega_a} (\nabla \Phi(z)^T \nabla u(\Phi(z)) )^2 d\Omega_aa
    \leq C_\Phi^2 \int_{\Omega_a} (\nabla u(\Phi(z)))^2 d\Omega_a \\
     & =
    \int_{\Omega_f}
    \left(
    \nabla u(\tilde{z})
    \right)^2
    |J\Phi^{-1}| d\Omega_f
    \leq C_J C_\Phi^2 \| u \|_{\Omega_f}^2.
  \end{align*}

  To prove the second inequality we follow a similar procedure,
  splitting the integral over the centerline of the tubular neighbourhood and
  the restrictions of  $\Phi$ to the perpendicular disks.
  The combined inequalities imply the thesis:
  \begin{equation}
      \| \restrW u \|_{H^1( \Gamma )} \leq C \| u \|_{H^1(\Omega_f)}.
  \end{equation}
\end{proof}

A possible right inverse of the restriction operator is the extension operator $\extV$:
\begin{align*}
\label{eq:extension_operator}
\extV  \colon H^1(\Gamma)^d &\rightarrow H^1(\Omega_f)^d,\\
w &\rightarrow w \circ P_\Gamma,
\end{align*}
where $P_\Gamma$ is the geometric projection from the domain $\Omega_f$
to $\Gamma$, i.e., for every $y \in \Omega_f$, $P_\Gamma y$
is the element of $\Gamma$ such that:
$$
\operatorname{dist}(y,P_\Gamma y) \leq \operatorname{dist}(y,x)
\qquad
\text{for every } x \in \Gamma.
$$
Equation~\eqref{eq_Phi_transform} guarantees that $P_\Gamma$ is well-defined.
The operator $\extV $ is clearly bounded in $L^2$; to prove that its gradient is
bounded it is sufficient to adapt the proof of Theorem~\ref{eq:general_cylinder}.

The following result allows us to define a function $g$ on $\Gamma$,
describing the geometry of $\Omega_f$, and reduce our computations to
a linear integration.

\begin{theorem}
\label{eq:general_cylinder}
For a general tubular neighbourhood $\Omega_f$ of a curve $\Gamma$,
for which the curvature $\kappa$ and the torsion $\tau$ are defined a.e., there exists
a function $g$ defined on $\Gamma$ satisfying:
\begin{equation}
\label{eq:general_cylinder_E}
\frac{1}{2}(\delta \mathbb{C}_f \nabla \extV w,\nabla \extV w)_{\Omega_f}
= \frac{c_\Gamma}{2} (g \delta \mathbb{C}_f \nabla_\Gamma w,\nabla_\Gamma w)_{\Gamma},
\end{equation}
for every $w \in H^1(\Gamma)^d$, with $c_\Gamma \coloneqq \pi a^2$.
\end{theorem}

\begin{proof}

  Let $I$ be an interval, and let $\omega \colon I \rightarrow \Gamma$ be the
  arclength parametrization of $\Gamma$. From the hypotheses, it is possible to
  define an orthonormal basis on $\Gamma$: the tangent $t(s)$, the normal
  $n(s)$, and the binormal $b(s)$, along with the curvature $\kappa(s)$ and the
  torsion $\tau(s)$ (see, e.g., \citep{kreyszig}).
  
  We consider a change of coordinates w.r.t. $t$, $n$, and $b$, which is
  typically used in physics to study wave propagation, optics and particle
  trajectories \cite{lee2018accelerator, tang1970orthogonal, ryder1993nonlinear}, based on the coordinates $(r,\vartheta,s)$, with the orthogonal metric:
  $$
    dx \cdot dx =
    dr^2 + r^2 d\vartheta^2+
    (1-\kappa r \cos(\vartheta - \hat{\theta}))^2 ds^2,
  $$
  and the explicit formula for the gradient:
  \begin{equation}
  \label{eq:gradient_polar}
    \nabla \cdot \coloneqq
    \frac{d\ \cdot }{dr}  e_1
    + \frac{1}{r} \frac{d\ \cdot }{d\vartheta}  e_2
    + \frac{1}{1-\kappa(s) r \cos(\vartheta - \hat{\theta})} \frac{d\ \cdot }{ds}  t.
  \end{equation}
  For a detailed description of these formulas, and the exact definition of
  the angles $\vartheta$ and $\hat{\theta}$ see~\cite[Chapter 3]{ryder1993nonlinear}.
  
Given $w \in  H^1(\Gamma)^d$ the extension $\extV w$ is constant
on each orthogonal disk, i.e., when computing the gradient with
Equation~\eqref{eq:gradient_polar} the components $e_r$ and $e_{\theta}$ are $0$.

Thus:
\begin{align*}
(\delta \mathbb{C}_f \nabla \extV w,\nabla \extV w)_{\Omega_f}
&=
\int_{\Omega_f}
\delta \mathbb{C}_f \nabla \extV w \nabla \extV w 
d\Omega\\
&=
\int_\Gamma
\left(
\int_{D_a(s)} \delta \mathbb{C}_f \nabla_\Gamma w(s) \nabla_\Gamma w(s)
dr d\theta
\right)
ds\\
&=
\int_\Gamma
\left(
\int_{D_a(s)} \frac{1}{1-\kappa(s) r \cos(\theta - \hat{\theta})}
\delta \mathbb{C}_f \frac{d}{ds} w(s)\frac{d}{ds} w(s)
dr d\theta
\right)
ds\\
&=
\int_\Gamma
\delta \mathbb{C}_f \nabla_\Gamma w(s) \nabla_\Gamma w(s)
\underbrace{
\left(
\int_{D_a(s)} \frac{1}{1-\kappa(s) r \cos(\theta - \hat{\theta})}
dr d\theta
\right)
}_{=\colon g(s) \pi a^2 }
ds\\
&=
(g\delta \mathbb{C}_f \nabla_\Gamma w,
\nabla_\Gamma w)_{\Gamma}.
\end{align*}

Where we defined the function:
$$
g(s) \coloneqq \frac{1}{\pi a^2} \int_{D_a(s)}
\frac{1}{1-\kappa(s) r \cos(\theta - \hat{\theta})}
dr d\theta.
$$
\end{proof}

The function $g$ encodes a geometrical stiffness information, which reflects
the fact that rods with finite thickness store more energy (i.e., the function
$g$ grows) when they are bent with high curvature w.r.t. to their thickness 
(i.e., when $\kappa r$ is close to one). In the case of a straight cylinder $\kappa(s) = 0$ a.e., and $g$ reduces to the constant one.

The condition $r \leq a < \max_s 1/\kappa(s)$, which is necessary to guarantee
that $\Omega_f$ is non
self-intersecting~\citep{duistermaat2004multidimensional}, plays a major role
also in defining the geometrical stiffness $g$ on the whole disk. Under these
hypotheses it is also possible to prove that there exists a constant $C_g > 0$
such that $g(s) \geq C_g > 0$.

\subsection{Problem Formulation}

When the fibers are thin compared to the domain size 
(i.e., $a \ll \text{diam}(\Omega))$, it is reasonable to make the following
assumptions:
\begin{enumerate}[i)]
  \item during the deformation, the radius of the fibers does not change;
  \item the deformation field inside the fiber is  
  similar to the deformation on the centerline of the fiber itself.
\end{enumerate}

If we consider then a solution $u$ to Problem~\eqref{eq:classicweak}, we expect that
\begin{equation}
  \label{eq:wconst}
  \extV \restrW u \simeq \restr{u}{\Omega_f},
\end{equation}
jusifying the replacement of the space $W\subseteq H^1(\Omega_f)^d$ with the 
much smaller space $\extV W_\Gamma \subset W \subseteq H^1(\Omega_f)^d$.

If we restrict functions in $W$ in Problem~\eqref{problem:weak3d3d_h} to the
subspace $\extV W_\Gamma$, and exploit Theorem~\ref{eq:general_cylinder} and
the fact that $\restrW \extV w = w$ for any $w\in W_\Gamma$, we obtain the
following energy functional for thin fibers
\begin{align*}
  \psi_T(u,w,\lambda) & = \frac{1}{2}(\mathbb{C}_\Omega \nabla u,\nabla u)_\Omega
  +  c_\Gamma (g \delta \mathbb{C}_f \nabla_\Gamma w,\nabla_\Gamma w)_{\Gamma}
  + \langle q, \restrW u -w \rangle
  - (b,u)_\Omega,
\end{align*}
where the non-slip condition on $\Omega_f$ has been replaced by an \emph{average} non-slip condition:
$$
  \langle q,\restrW u -w \rangle = 0 \qquad \forall\ q \in Q_\Gamma,
$$
where $Q_\Gamma \coloneqq W_\Gamma'$, and here and below the notation $\langle \cdot, \cdot \rangle$ is used to indicate the duality product between $Q_\Gamma'=W_\Gamma$ and $Q_\Gamma := W_\Gamma'$.

We obtain the following formulation for the coupling of thin fibers with a
thick elastic matrix:

\begin{problem}[1D-3D Weak Formulation]
\label{pb:pb6}
Given $b \in L^2(\Omega)^d$,
find $u \in V, w\in W_\Gamma, \lambda \in Q_\Gamma$ such that:
\begin{align*}
  (\mathbb{C}_\Omega\nabla u,\nabla v)_\Omega
  + \langle \lambda,\restrW v \rangle
                                   & = (b,v)_\Omega \qquad & \forall v \in V,          \\
  c_\Gamma (g \delta \mathbb{C}_f \nabla_\Gamma w, \nabla_\Gamma y)_{\Gamma}
  - \langle \lambda,y  \rangle
                                   & = 0 \qquad            & \forall y \in W_{\Gamma}, \\
  \langle q,\restrW u - w  \rangle & = 0 \qquad            & \forall q \in Q_\Gamma.
\end{align*}
\end{problem}

Similarly to what was done in the full order problem, we define the space $\mathbb{V}\coloneqq V \times W_{\Gamma}$,
its norm $\| (\cdot, \cdot) \|_{\mathbb{V}} \coloneqq \| \cdot \|_{V} + \| \cdot \|_{W_\Gamma}$,
and the operators:
\begin{align*}
  \mathbb{F}_T & \coloneqq (\mathbb{C}_\Omega\nabla u,\nabla v)_\Omega +
  c_\Gamma (g \delta \mathbb{C}_f \nabla_\Gamma w,\nabla_\Gamma w)_{\Gamma}, \\
  \mathbb{E}_T & \coloneqq \langle q,\restrW u - w \rangle.
\end{align*}

\subsection{Well-posedness, existence and uniqueness}

\begin{proposition}
  \label{prop:inf-supE1d}
  There exists a constant $\alpha_5 > 0$ such that:
  $$
    \inf_{q \in Q_\Gamma} \sup_{(v,w) \in \mathbb{V}}
    \frac{\langle q,\restrW v- w \rangle}
    { \|(v,w)\|_{\mathbb{V}} \|q\|_{Q_\Gamma}}
    \geq \alpha_5,
  $$
  where:
\begin{equation*}
  \ker{\mathbb{E}_T} \coloneqq \left\lbrace \pmb v \coloneqq (v,w) \in \mathbb{V}\colon
  \langle q,  \restrW v - w \rangle = 0 \  \forall q \in Q_\Gamma \right\rbrace.
\end{equation*}
  Moreover $\alpha_5 = 1$.
\end{proposition}

\begin{proof}
  The non slip condition is given by the duality pairing between
  $Q_\Gamma = W_\Gamma'$ and $W$;
  by definition of the norm in the dual space $Q_\Gamma$:
  \begin{align*}
    \|q\|_{Q_\Gamma} = \sup_{ w \in W_\Gamma }  \frac{ \langle q , w \rangle}{\| w\|_{W}}
    \leq \sup_{v \in V, w \in W_\Gamma }
    \frac{\langle q ,  \restrW v - w \rangle}
    {(\|w\|_{W}^2+\|v\|_V^2)^{\frac{1}{2}}},
  \end{align*}
  where the last inequality can be proven fixing $v=0$.

  The final
  statement is found dividing by $\|q\|_{Q_\Gamma}$ and taking the $\inf_{q \in Q}$.
\end{proof}

\begin{proposition}
  \label{prop:inf-supF1d}
  Assume $\mathbb{C}_{\Omega}$ and $\mathbb{C}_f$ are strongly elliptic,
  with constants $c_\Omega$ and $c_f$ respectively such that $c_\Omega > c_f>0$;
  there exists a constant $\alpha_6 > 0$ such that:
  $$
    \inf_{(u,w) \in \ker{\mathbb{E}_T}}
    \sup_{(v,y) \in \ker{\mathbb{E}_T}}
    \frac{(\mathbb{C}_\Omega \nabla v, \nabla u)_{\Omega}
    +
    c_\Gamma (g\delta \mathbb{C}_f \nabla_\Gamma y,\nabla_\Gamma w)_{\Gamma}}
    { \|(v,y)\|_{\mathbb{V}} \|(u,w)\|_{\mathbb{V}}}
    \geq \alpha_6.
  $$
\end{proposition}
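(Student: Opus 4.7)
The plan is to follow the template of Proposition \ref{prop:inf-supF}: test the bilinear form on the diagonal $(v,y) = (u,w)$ and bound it below by the squared $\mathbb{V}$-norm, then take the supremum. First, I would invoke the strong ellipticity of $\mathbb{C}_\Omega$ and of $\delta\mathbb{C}_f$ (for the latter to be elliptic with constant $c_f - c_\Omega$ the natural hypothesis is $c_f > c_\Omega$, matching Proposition \ref{prop:inf-supF}; the inequality written in the statement above appears reversed by a typo), together with the uniform positive lower bound $g(s) \geq C_g > 0$ established right after Proposition \ref{eq:general_cylinder}, obtaining
\begin{equation*}
\mathbb{F}_T(\pmb u,\pmb u) \geq c_\Omega \|\nabla u\|_\Omega^2 + c_\Gamma C_g (c_f - c_\Omega) \|\nabla_\Gamma w\|_\Gamma^2.
\end{equation*}

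Second, I would apply the Poincar\'e inequality on $V = H^1_{0,\partial\Omega}(\Omega)^d$ to promote the seminorm $\|\nabla u\|_\Omega$ to the full norm $\|u\|_V$, obtaining $\|\nabla u\|_\Omega^2 \geq \|u\|_V^2/(1+c_p^2)$. This step is entirely three-dimensional and matches in spirit the bulk argument of Proposition \ref{prop:inf-supF}; the fiber contribution is kept as a non-negative remainder.

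Third, and this is the distinctive step of the $3D$-$1D$ setting, I would use the kernel constraint $\bar u = w$ a.e. on $\Gamma$ together with the $H^1$-continuity of the averaging operator established in Proposition \ref{proposition:ineq_for_infsup} to trade the $W_\Gamma$-norm of $w$ for the $V$-norm of $u$: explicitly $\|w\|_{W_\Gamma} = \|\bar u\|_{H^1(\Gamma)^d} \leq c_b \|u\|_{H^1(\Omega_a)^d} \leq c_b \|u\|_V$, so that on the kernel the full $\mathbb{V}$-norm is dominated by the bulk component alone, $\|(u,w)\|_\mathbb{V}^2 \leq (1+c_b^2)\|u\|_V^2$. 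Chaining this with the previous step yields $\mathbb{F}_T(\pmb u,\pmb u) \geq c_\Omega \|(u,w)\|_\mathbb{V}^2/[(1+c_p^2)(1+c_b^2)]$, and taking $\alpha_6 := c_\Omega/[(1+c_p^2)(1+c_b^2)]$ closes the estimate, since the supremum over $(v,y) \in \ker\mathbb{E}_T$ is bounded below by the diagonal value at $(u,w)$.

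The main obstacle is the third step: unlike the fully three-dimensional case, where restriction $v \mapsto v|_{\Omega_f}$ is trivially $H^1$-bounded, the curve $\Gamma$ has codimension two in $\Omega$ and no naive trace exists, so every bit of control of $w$ by $u$ on the kernel must be channelled through the averaging operator. The boundedness of that operator from $H^1(\Omega_a)$ into $H^1(\Gamma)$, with a constant that is uniform in the tubular geometry, rests on the geometric hypothesis $a < \max_s 1/\kappa(s)$ and on the change-of-variables bookkeeping of Lemmas \ref{lemma:cinfavg} and \ref{lemma:phi_tubular}; without those the ell-ker bound would collapse. A secondary care point is that the curvature weight $g$ must remain bounded uniformly away from zero, which is again ensured by the same radius hypothesis.
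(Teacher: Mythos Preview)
Your argument is correct and follows the same diagonal-testing template as the paper's proof (including the observation that the hypothesis should read $c_f>c_\Omega$). The one substantive difference is in how you control $\|w\|_{W_\Gamma}$ on the kernel: the paper retains the fiber term $c_\Gamma C_g(c_f-c_\Omega)\|\nabla_\Gamma w\|_\Gamma^2$ to cover the gradient part of $\|w\|_{W_\Gamma}$ and uses only the $L^2$ averaging bound $\|\bar v\|_\Gamma\leq c_b\|v\|_{\Omega_a}$ (stated just before the proposition) to cover the $L^2$ part, producing $\alpha_6=\tfrac{1}{2}c_p c_\Gamma c_b\min(c_\Omega,C_g(c_f-c_\Omega))$. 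You instead discard the fiber term as a non-negative remainder and invoke the full $H^1$ continuity of the averaging operator from Proposition~\ref{proposition:ineq_for_infsup} to bound all of $\|w\|_{W_\Gamma}$ by $\|u\|_V$, yielding $\alpha_6=c_\Omega/[(1+c_p^2)(1+c_b^2)]$. Your route is slightly cleaner---it needs only non-negativity of $\delta\mathbb{C}_f$, not a quantitative lower bound---while the paper's version tracks the fiber stiffness explicitly in the constant, which may be informative when $c_f-c_\Omega$ is large. Either choice is valid.
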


\begin{proof}

  For every $(u,w) \in \ker \mathbb{E}_T$:
  \begin{align*}
    \sup_{(v,y) \in \ker(\mathbb{E}_T)}
     & \frac{
    (\mathbb{C}_\Omega \nabla u, \nabla v)_\Omega
    + c_\Gamma(g\delta \mathbb{C}_f\nabla_\Gamma w, \nabla_\Gamma y)_{\Gamma}
    }
    {\|(v,y)\|_{\mathbb{V}}}                 \\
     & \geq
    \frac{
    c_\Omega  (\nabla u,\nabla u)_{\Omega}
    + c_\Gamma C_g (c_f - c_\Omega ) (\nabla_\Gamma w, \nabla_\Gamma  w)_{\Gamma}
    }
    {\|(u,w)\|_{\mathbb{V}}}                 \\
     & \geq
    \frac{c_p c_\Gamma c_b
      \min(c_\Omega,  C_g (c_f - c_\Omega ))
    }{2}
    \frac{
    (u, u)_{H^1(\Omega)}
    +  (w, w)_{H^1(\Gamma)}
    }
    {\|(u,w)\|_{\mathbb{V}}}                 \\
     & \geq \alpha_6 \|(u,w)\|_{\mathbb{V}},
  \end{align*}
  with $\alpha_6 \coloneqq \frac{c_p c_\Gamma c_b
      \min(c_\Omega,  C_g (c_f - c_\Omega ))
    }{2}$.

  The result is obtained dividing and considering the
  $\inf_{(u,w) \in \ker{\mathbb{E}_T}}$.
\end{proof}

Using the saddle point theory we conclude that Problem~\ref{pb:pb6}
is well-posed, and has a unique solution.

\subsection{Finite element discretization}
\label{subsec:FEMThinFibers}
The discretization of Problem \ref{pb:pb6} for thin fibers
follows the steps of Section \ref{subsec:FEDiscretization},
on the domains $\Omega$ and $\Gamma$ (the fiber's
one-dimensional core).
Consider two independent discretizations for these domains;
the family $\mathcal{T}_h(\Omega)$ of regular meshes
in $\Omega$, and a family $\mathcal{T}_h(\Gamma)$
of regular meshes in $\Gamma$.
We assume no geometrical error is committed when meshing
and consider two independent finite element discretizations
$V_h\subset V,\ W_h\subset W_\Gamma$, and let $Q_h = W_h$.

\begin{problem}[1D-3D Discretized Weak Formulation]
\label{pb:weakTubularDiscr}
Given $b \in L^2(\Omega)^d$,
find $u_h~\in~V_h, w_h\in W_h, \lambda_h \in Q_h$ such that:
\begin{align}
  \label{eq:WeakForm1d3dDiscr}
  (\mathbb{C}_\Omega\nabla u_h,\nabla v_h)_\Omega + \langle \lambda_h,\restrW v_h \rangle
                               & = (b,v_h)_\Omega \qquad & \forall v_h \in V_h, \\
  c_\Gamma (g \delta \mathbb{C}_f \nabla_\Gamma w_h, \nabla_\Gamma y_h)_{\Gamma}
  - \langle \lambda_h, y_h \rangle
                               & = 0 \qquad              & \forall y_h \in W_h, \\
  \label{eq:pb7n3}
  \langle q_h,\restrW u_h-w_h \rangle & = 0 \qquad              & \forall q_h \in Q_h.
\end{align}
\end{problem}

Again, existence, well posedness and convergence are guaranteed if the
classical inf-sup conditions are satisfied:

\begin{proposition}
  \label{prop:inf-supEht}
  Assume the $L^2$ projection $P_{W_\Gamma} \colon W_\Gamma \rightarrow W_h$
  is continuous and $H^1$-stable, as in Proposition~\ref{prop:inf-supEh};
  then there exists a constant $\alpha_{7} > 0$, independent of $h$, such that:
  $$
    \inf_{q_h \in W_h} \sup_{(v_h,w_h) \in \mathbb{V}_h}
    \frac{(q_h, \restrW v_h - w_h )_{\Gamma}}
    { \|(v_h,w_h)\|_{\mathbb{V}} \|q_h\|_{\Gamma}}
    \geq \alpha_{7}.
  $$
\end{proposition}

\begin{proposition}
  \label{prop:inf-supFht}
  Assume $\mathbb{C}_{\Omega}$ and $\mathbb{C}_f$ are strongly elliptic,
  with constants $c_\Omega$ and $c_f$ respectively such that $c_f> c_\Omega > 0$;
  then there exists a constant $\alpha_{8} > 0$, independent of $h$, such that:
  $$
    \inf_{(u_h,w_h) \in \ker \mathbb{E}_{T,h}}
    \sup_{(v_h,y_h) \in \ker \mathbb{E}_{T,h}}
    \frac{(\mathbb{C}_\Omega \nabla v_h, \nabla u_h)_{\Omega}
    + c_\Gamma (g \delta \mathbb{C}_f\nabla_\Gamma y_h, \nabla_\Gamma w_h)_{\Gamma}}
    { \|(v_h,y_h)\|_{\mathbb{V}} \|(u_h,w_h)\|_{\mathbb{V}}}
    \geq \alpha_{8},
  $$
    where:
\begin{equation*}
  \ker{\mathbb{E}_{T,h}} \coloneqq \left\lbrace \pmb v_h \coloneqq (v_h, w_h) \in \mathbb{V}_h \colon 
   \langle q_h,  \restrW v_h - w_h \rangle = 0 \  \forall q_h \in Q_h \right\rbrace.
\end{equation*}
\end{proposition}

The proofs of these propositions are variations
on the proof Propositions~\ref{prop:inf-supEh} and \ref{prop:inf-supFh}. It is
possible to prove an estimate analogous to the one of Theorem\ref{th:err_3d3d}:
\begin{theorem}
  Consider $\mathbb{C}_\Omega$ and $\mathbb{C}_f$, elastic stress tensors
  satisfying the hypothesis of Proposition \ref{prop:inf-supF1d},
  the domains $\Omega$ and $\Omega_f$ with the regularity required in
 this section,
  and $b \in L^2(\Omega)^d$.
  Then the following error estimate holds for $(u,w,\lambda)$, solution to Problem\ref{pb:pb6},
  and  $(u_h,w_h,\lambda_h)$, solution of Problem \ref{pb:weakTubularDiscr}:
  \begin{equation}
    \label{eq:sol_error3d1d}
    \begin{aligned}
      \| u - u_h \|_V
       & + \| w - w_h \|_{W_\Gamma}
      + \| \lambda - \lambda_h \|_{Q_\Gamma}
      \leq                 \\
      C_e
       & \left(
      \inf_{v_h \in V_h}  \| u - v_h \|_V
      + \inf_{y_h \in W_h}  \| w - y_h \|_{W_\Gamma}
      + \inf_{q_h \in Q_h}  \| \lambda - \lambda_h \|_{Q_\Gamma}
      \right),
    \end{aligned}
  \end{equation}
 where $C_e > 0$, and depends on $\alpha_5,\alpha_6,c_\Omega,c_f$ 
 and the norms of the operators
$\mathcal{K}_{\Omega} \colon V \rightarrow V'$ defined as
$\langle \mathcal{K}_{\Omega} u,\cdot\rangle
\coloneqq (\mathbb{C}_{\Omega}\nabla u, \nabla \cdot)_\Omega$, and
$ \mathcal{K}_f \colon W_\Gamma \rightarrow Q_\Gamma$ defined as
$
 \langle \mathcal{K}_f w,\cdot\rangle
 \coloneqq (\delta \mathbb{C}_f\nabla w, \nabla \cdot)_{\Omega_f}$.
\end{theorem}

\section{Numerical validation}
\label{sec:num-exp}

The analytical solution of Problems \ref{pb:4} and \ref{pb:pb6}, even for simple
configurations, is non-trivial: we chose some FRMs structures which are studied
in literature, and used the known approximated solutions as a comparison for our model.

Using the deal.II library \cite{dealii-2020, dealII90,
  BangerthHartmannKanschat2007, MaierBardelloniHeltai-2016-a,
  SartoriGiulianiBardelloni-2018-a}, and the deal.II step-60 tutorial
\cite{step60} we developed a model for thin fibers proposed in Section~\ref{sec:sec3},
and compared it with the Rule of Mixtures and the Halpin-Tsai
configurations in some pull and push tests.

\paragraph{Numerical Setting}

To solve Problem~\ref{pb:weakTubularDiscr} on a collection of fibers we define:
\begin{enumerate}[i)]
\item $V_h = \text{span} \{v_i\}_{i=1}^N \subset H^1(\Omega)$, finite element
space of dimension $N \in \mathbb{N}$ defined on $\Omega$, the elastic matrix;
\item $W_h = \text{span} \{w_a\}_{i=a}^M \subset H^1(\Gamma)$ the finite
element discretization of dimension $M \in \mathbb{N}$ defined on the collection of
$n_f \in \mathbb{N}$ fibers $\Gamma \coloneqq \bigcup_{k=1}^{n_f}\Gamma_k \subset \Omega$;
\item $Q_h$: the space of the Lagrange multiplier, discretized as $W_h$.
\end{enumerate}
We use $i,j$ as indices for the space $V_h$ and $a,b$
as indices for the space $W_h$, and assume all hypotheses on spaces
and meshes of Section~\ref{subsec:FEMThinFibers} are satisfied.
For each fiber $\Gamma_k$ we define its tubular neighborhood $\Omega_{a,k}$, and define
$$
  \Omega_f \coloneqq \bigcup_{k=1}^{n_f} \Omega_{a,k}.
$$
In our implementation, we compute integrals over $ \Omega_f$ by quadrature
formulas, computing the integration over the orthogonal disks $D_a(x)$ to
$\Gamma$ using the mid point rule.

The restriction of the operators of Problem~\ref{pb:weakTubularDiscr} to the
discrete finite element spaces produce the following sparse matrices:
\begin{equation}
  \begin{aligned}
    A & \colon V_h \rightarrow V_h' \qquad
      & A_{ij}
      & \coloneqq (\mathbb{C}_{\Omega}\nabla v_i, \nabla v_j)_\Omega,                                                                                            \\
    K & \colon W_h \rightarrow W_h' \qquad
      & K_{ab}                                                        & \coloneqq c_\Gamma (g \delta \mathbb{C}_{f}\nabla_\Gamma w_a, \nabla_\Gamma w_b)_\Gamma, \\
    B & \colon V_h \rightarrow Q_h' \qquad
      & B_{ia}                                                        & \coloneqq
    ( \restr{v_i}{\Gamma}, w_a)_\Gamma,                                                                                                                                \\
    L & \colon W_h \rightarrow Q_h' \qquad
      & L_{ab}                                                        & \coloneqq ( w_a, w_b)_\Gamma.
  \end{aligned}
\end{equation}
Here $B$ is the coupling matrix from $V_h$ to $W_h$,
$M$ the mass-matrix of $W_h$.
The discretization of Problem \ref{pb:weakTubularDiscr} becomes:
find $(u,w,\lambda_h) \in V_h \times W_h \times Q_h$ such that
\begin{equation}
  \label{matrix:pb1}
  \begin{pmatrix}
    A & 0  & B^T  \\
    0 & K  & -L^T \\
    B & -L & 0    \\
  \end{pmatrix}
  \begin{pmatrix}
    u \\
    w \\
    \lambda
  \end{pmatrix}
  =
  \begin{pmatrix}
    g \\
    0 \\
    0
  \end{pmatrix}
  ,
\end{equation}
where $g_i \coloneqq (b,v_i)_\Omega$.
We reduce the system through the solution of 
the following linear problems:
\begin{align*}
Kw = L^T \lambda =  L \lambda &\Rightarrow \lambda = L^{-1} K w, \\
Bu = Lw &\Rightarrow w = L^{-1}Bu,
\end{align*}
obtaining:
\begin{equation}
  \label{eq:to_solve}
  (A+B^T L^{-1} K L^{-1} B)u = (A+ P_{\Gamma}^T K P_{\Gamma})u = g,
\end{equation}
where $P_{\Gamma} \coloneqq L^{-1} B$.
Boundary conditions are imposed
weakly, using Nitsche method as done in~\cite{RotundoKimJiang-2016-a}.

\subsection{Model description}

\begin{figure}[hb]
\nocaption
  \begin{subfigure}{0.45\textwidth}
    \centering
    \def\x{3.0/5.0}
    \begin{tikzpicture}[scale = 1.2]
      \draw[very thick] (0,0) -- (0,3+\x);
      \draw[very thick] (0,3+\x) -- (3+\x,3+\x);
      \draw[very thick] (3+\x,3+\x) -- (3+\x,0);
      \draw[very thick] (3+\x,0) -- (0,0);
      \foreach \i in {0,...,5}
        {
          \foreach \j in {0,...,5}
            {
              \filldraw[fill=red] (\i * \x + 0.15, \j* \x + 0.15) circle (0.05);
              \filldraw[fill=red] (\i * \x + 0.45, \j* \x + 0.45) circle (0.05);
            }
        }
    \end{tikzpicture}
    \caption{Section with homogeneous fibers.}
    \label{fig:fibre_omogenee}
  \end{subfigure}
  \hfill
  \begin{subfigure}{0.49\textwidth}%
    \centering
    \begin{tikzpicture}[thick,scale=3.5]
      \coordinate (A1) at (0, 0);
      \coordinate (A2) at (0, 1);
      \coordinate (A3) at (1, 1);
      \coordinate (A4) at (1, 0);
      \coordinate (B1) at (0.3, 0.3);
      \coordinate (B2) at (0.3, 1.3);
      \coordinate (B3) at (1.3, 1.3);
      \coordinate (B4) at (1.3, 0.3);

      \draw[very thick] (A1) -- (A2);
      \draw[very thick] (A2) -- (A3);
      \draw[very thick] (A3) -- (A4);
      \draw[very thick] (A4) -- (A1);

      \draw[dashed] (A1) -- (B1);
      \draw[dashed] (B1) -- (B2);
      \draw[very thick] (A2) -- (B2);
      \draw[very thick] (B2) -- (B3);
      \draw[very thick] (A3) -- (B3);
      \draw[very thick] (A4) -- (B4);
      \draw[very thick] (B4) -- (B3);
      \draw[dashed] (B1) -- (B4);

      \node at (0.5, 0.5) {$\mathbf{2}$};
      \node at (0.8, 0.8) {$3$};
      \node at (1.2, 0.7) {$\mathbf{1}$};
      \node at (0.15, 0.7) {$0$};
      \node at (0.7, 1.15) {$\mathbf{5}$};
      \node at (0.7, 0.15) {$4$};

    \end{tikzpicture}
    \caption{$\Omega$'s faces numbering}
    \label{fig:cubo_fibre}
  \end{subfigure}
\end{figure}

For our simulations we use linear finite elements and 
uniformly refined hexaedral meshes. The elastic matrix is the unitary cube
$\Omega \coloneqq [0,1]^3$, where for boundary conditions we refer
to Figure \ref{fig:cubo_fibre}.
The used elastic tensors are
described in Equations \ref{eq:elasticity_tens}-\ref{eq:elasticity_tensf};
for the model description we
use the following parameters:
\begin{itemize}
  \item $r_\Omega, r_\Gamma$: global refinements of the meshes describing $\Omega$ and $\Gamma$ respectively.
  \item $\lambda_\Omega, \mu_\Omega, \lambda_f, \mu_f$: Lam\'{e} parameters for the elastic matrix
  and the fibers respectively.
  \item $\beta$: fiber volume ratio or representative volume element (RVE),
        i.e., $\beta = |\Omega_f| / |\Omega| $.
  \item $a$: the radius of the fibers.
\end{itemize}

\subsection{Homogeneous fibers}

We consider a unidirectional composite,
where fibers are uniform in properties and diameter, continuous,
and parallel throughout the composite $\Omega$ (see Figure
\ref{fig:fibre_omogenee}).
We compare the results obtained with our model with the ones
obtained using the Rule of Mixtures \citep{hashin1965elastic, agarwal2017analysis}
to approximate the stress-strain equation:
\begin{equation}
  \label{eq:homog}
  S[u] = \frac{1}{2}(\mathbb{C}_\Omega Eu,Eu)_\Omega +
  \beta
  \frac{1}{2} (\delta \mathbb{C}_f E u,E u)_{\Omega}.
\end{equation}
This approximation agrees with experimental tests especially for tensile loads,
and when the fiber ratio $\beta$ is small.

Multiple tests were run, keeping $\beta$ constant,
while increasing the fiber density
and reducing the fiber diameter; we expect this process
to render the coupled model solution increasingly
close to the homogenized one.

\paragraph{Comparing solutions}

\begin{figure}
\nocaption
  \centering
  \begin{subfigure}{0.45\textwidth}
    \centering
    \includegraphics[width=0.9\textwidth]{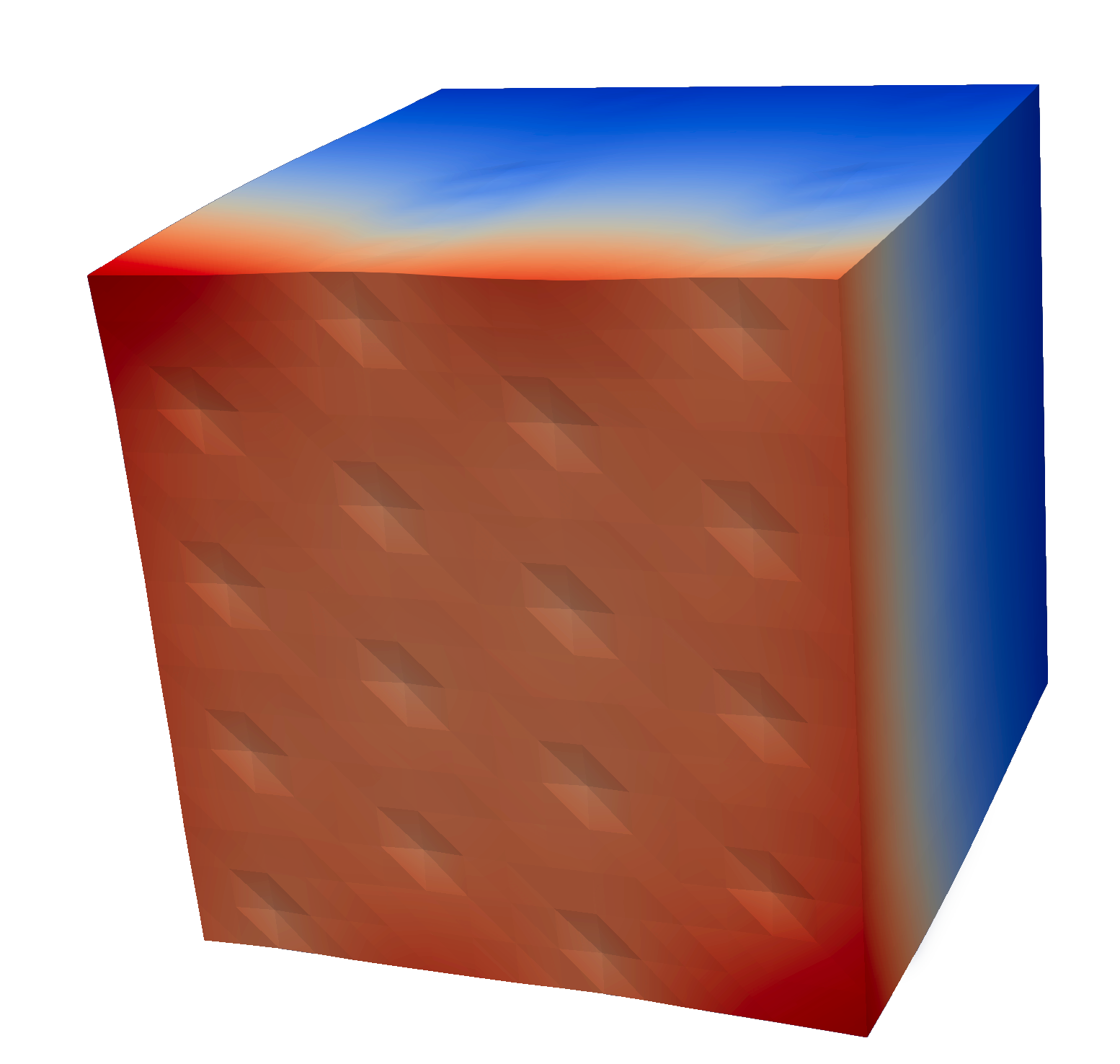} 
    \caption{Pull test example, with $r_\Omega = 4$}
    \label{fig:mesh_refinement4}
  \end{subfigure}\hfill
  \begin{subfigure}{0.45\textwidth}
    \centering
    \includegraphics[width=0.9\textwidth]{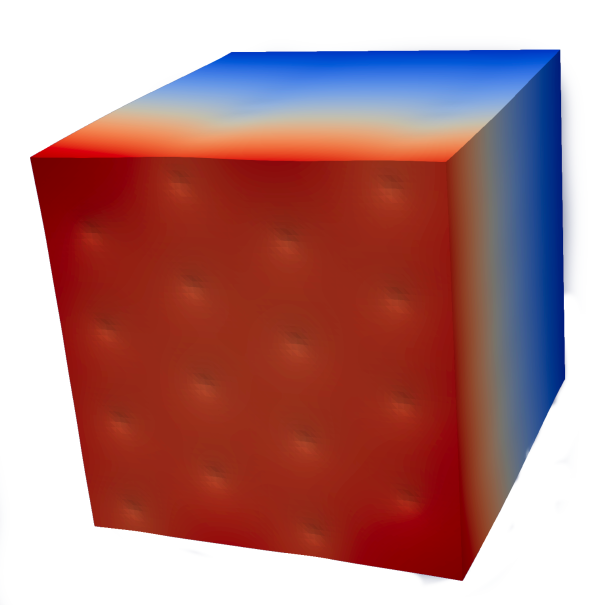} 
    \caption{Pull test example, with $r_\Omega = 6$}
    \label{fig:mesh_refinement6}
  \end{subfigure}
\end{figure}

Figures \ref{fig:mesh_refinement4} and \ref{fig:mesh_refinement6} illustrate
the influence of $\Omega$'s refinement
on the final result, when using few fibers on a pull test.
Stiff fibers oppose being stretched, deforming the
elastic matrix $\Omega$ through the non-slip condition:
near each fiber, the deformation of $\Omega$ should be symmetrical,
resembling a cone. This effect is better described in Figure \ref{fig:mesh_refinement6},
where the higher value of $r_\Omega$ results in greater geometrical flexibility
of the elastic matrix,
allowing a better description of the effect of each fiber.
Lower values of $r_\Omega$ result in a non symmetrical solution,
as in Figure \ref{fig:mesh_refinement4}.
The lower geometrical flexibility results in an ``averaged'' solution which,
in the case of few fibers, is
closer to the homogenized model.

The slopes in the plots have been computed by taking the ratio of the error in
two subsequent simulation, i.e.,
$$
  \frac{
    \ln(\mathbf{e}_2/\mathbf{e}_1)
  }
  {
    \ln(n_{f,2}/n_{f,1})
  },
$$
where $\mathbf{e}_1,\mathbf{e}_2$ be the $L^2$ errors and $n_{f,1},n_{f,2}$ be
the number of fibers for two subsequent simulations.

\paragraph{Pull test along fibers}
\label{par:pull}
Dirichlet homogeneous conditions
is applied to face $0$, Neumann homogeneous conditions is
applied to faces $2,3,4,5$.
In the Push Test, the Neumann condition
$0.05$ is applied to face $1$. For the Pull Test, the value $-0.05$ is
applied to the same face.
Boundary conditions are applied only to $\partial \Omega$;
the fibers interact through the coupling with the elastic matrix.
We report here only data from pull tests, as push tests gave comparable results.

\begin{figure}
\nocaption
  \centering
  \begin{subfigure}{0.45\linewidth}
    \begin{tikzpicture}[scale=0.7]
      \begin{loglogaxis}[xlabel={Number of Fibers},ylabel={L2 Error}]
        \addplot table {data/Full_3/nf_test_full20_1000_0.1_3_1.out};
        \addlegendentry{$(r_\Omega = 3, r_\Gamma = 1)$};

        \addplot table {data/Full_3/nf_test_full20_1000_0.1_3_2.out};
        \addlegendentry{$(r_\Omega = 3, r_\Gamma = 2)$};

        \addplot table {data/Full_3/nf_test_full20_1000_0.1_3_3.out};
        \addlegendentry{$(r_\Omega = 3, r_\Gamma = 3)$};

        \logLogSlopeTriangleReversed{0.2}{.1}{.65}{0.8}{color={rgb:red,115;green,76;yellow,38}}{$0.8$}
      \end{loglogaxis}
    \end{tikzpicture}
    \caption{Pull Test, $\lambda=\lambda_F=0.4, \mu=1,\\ \mu_F=1000, \beta=0.1$.}
    \label{fig:compare_pull_1}
  \end{subfigure}
  \hfill
  \begin{subfigure}{0.45\linewidth}
    \begin{tikzpicture}[scale=0.7]
      \begin{loglogaxis}[xlabel={Number of Fibers},ylabel={L2 Error}]

        \addplot table {data/Full_4/nf_test_full20_1000_0.1_4_3.out};
        \addlegendentry{$(r_\Omega = 4, r_\Gamma = 3)$};

        \addplot table {data/Full_3/nf_test_full20_1000_0.1_3_3.out};
        \addlegendentry{$(r_\Omega = 3, r_\Gamma = 3)$};

        \logLogSlopeTriangleReversed{0.2}{.1}{.63}{0.8}{color=blue}{$0.8$}
      \end{loglogaxis}
    \end{tikzpicture}
    \caption{Pull Test, $\lambda=\lambda_F=0.4, \mu=1,\\ \mu_F=1000, r_\Omega=3, r_\Gamma=3$.}
    \label{fig:compare_pull_3}
  \end{subfigure}
\end{figure}

\begin{figure}
\nocaption
  \centering
  \begin{subfigure}{0.45\linewidth}
  \centering
  \begin{tikzpicture}[scale=0.7]
    \begin{loglogaxis}[xlabel={Number of Fibers},ylabel={L2 Error}]
      \addplot table {data/Full_3/nf_test_full20_100_0.1_3_3.out};
      \addlegendentry{$\mu_F = 100$};

      \addplot table {data/Full_3/nf_test_full20_500_0.1_3_3.out};
      \addlegendentry{$\mu_F = 500$};

      \addplot table {data/Full_3/nf_test_full20_1000_0.1_3_3_lim.out};
      \addlegendentry{$\mu_F = 1000$};

      \logLogSlopeTriangleReversed{0.21}{.1}{.3}{0.84}{color={rgb:red,115;green,76;yellow,38}}{$0.84$}

    \end{loglogaxis}
  \end{tikzpicture}
  \caption{Pull Test with varying $\mu_F$,
    $\lambda = \lambda_F = 0.4, \mu = 1, \beta = 0.1, r_\Omega = 3, r_\Gamma = 3$}
  \label{fig:compare_pull_2}
  \end{subfigure}
  \hfill
  \begin{subfigure}{0.45\linewidth}
  \centering
  \begin{tikzpicture}[scale=0.7]
    \begin{loglogaxis}[xlabel={Number of Fibers},ylabel={L2 Error}]
      \addplot table {data/rand1/4_2_rand.dat};
      \logLogSlopeTriangleReversed{0.3}{.15}{.488}{0.4}{color=blue}{$0.4$}
    \end{loglogaxis}
  \end{tikzpicture}
  \caption{Random Pull Test with one dimensional approximation;
  $r_\Omega=4$, $r_\Gamma=2$.}
  \label{fig:rand_1}
  \end{subfigure}
\end{figure}

The use of the projection matrix $P_\Gamma \colon V_h \rightarrow W_h$,
and the error estimate for the fully three-dimensional case (Inequality
\ref{eq:sol_error3d3d}),
both suggest that the solution quality on the elastic matrix depends on both $V_h$
and $W_h$.
This is apparent in Figure \ref{fig:compare_pull_1}, where
for $r_\Gamma = 1$ the mesh of $\Gamma$ is unable to describe the stretch of the
material, resulting in the error remaining approximately constant after a certain
fiber density is reached. A similar behaviour emerges in the case $r_\Gamma = 2$.
Figure \ref{fig:compare_pull_3}
shows that refining only the elastic
matrix does not improve the solution quality:
as the number of fibers increases, the error converges
to approximately the same value, which is limited by $r_\Gamma$.

Figure \ref{fig:compare_pull_2} shows an error comparison as the value of $\mu_F$
varies: as expected our model is better suited for stiff fibers.

\subsection{Random fibers}

\begin{figure}
  \centering
  \includegraphics[width=0.5\textwidth]{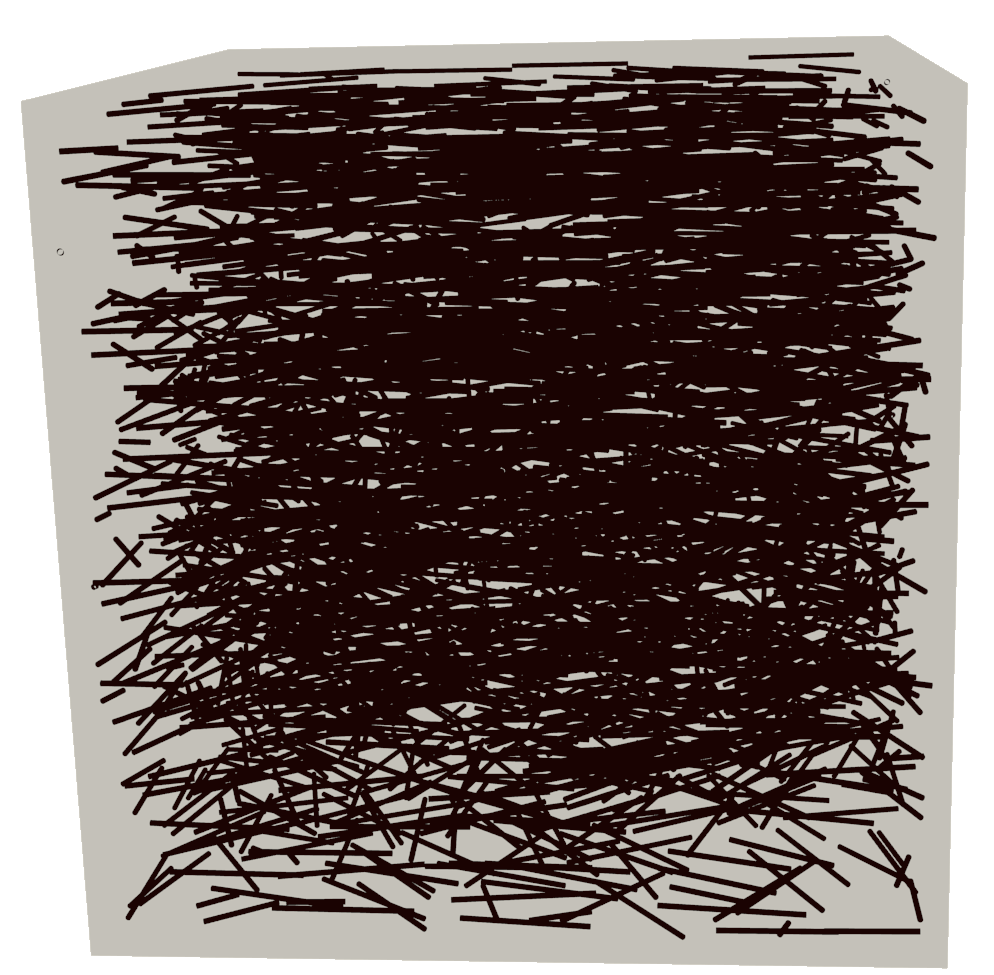} 
  \caption{Random Fibers Disposition}
\end{figure}

We consider a pull test on a random chopped
fiber reinforced composite: we distribute small fibers
at a random points of $\Omega$, with
a random direction parallel to the $\langle x,y\rangle$ plane;
the fibers share the same size and properties.
If a fiber surpasses the edge of $\Omega$, it is cut.

For more details on the algorithm used to distribute
the fibers see the Random Sequential Adsorption algorithm
\cite{pan2008analysis}; our implementation generates only
the plane angle, and does not implement an
intersection-avoidance mechanism.

As a comparison model, we estimate the material parameters using
the empirical Halpin-Tsai \cite{halpin1976halpin}; which
we report here for longitudinal moduli,
as described in \cite{agarwal2017analysis}.
The fibers have length $l$, diameter $d$, the fiber and
the matrix Young moduli are $E_f$ and $E_m$ respectively, $\beta$
is the volume fraction occupied by the fibers.
We define two empirical constants:
$$
\eta_L = \frac{(E_f/E_m)-1}{(E_f/E_m)+(2l/d)}, \qquad
\eta_T = \frac{(E_f/E_m)-1}{(E_f/E_m)+2}.
$$
This allows to compute the longitudinal and transverse moduli for
aligned short fibers:
$$
E_L = \frac{1 + (2l/d) \eta_L \beta }{1 - \eta_L \beta}, \qquad
E_T = \frac{1 + 2 \eta_T \beta }{1 - \eta_T \beta}.
$$
If fibers are randomly oriented in a plane the following equations
can be used to predict the elastic modulus:
$$
E_C = \frac{3}{8} E_L + \frac{5}{8} E_T, \qquad
\mu_C = \frac{1}{8} E_L + \frac{1}{4} E_T.
$$
Since a random fiber composite is considered isotropic in its plane,
the Poisson's ratio can be calculated as:
\begin{equation}
\nu_R = \frac{E_C}{2 \mu_C} - 1.
\end{equation}
According to this approximation, the properties of this composite do not depend directly on
the fiber length or radius, but on the aspect ratio $l/d$.
Our test setting runs on the unitary cube, with a fiber ratio
$\beta = 0.135$ and a fiber aspect ratio of $\frac{l}{2r} \approx 10$,
where $l$ is the fiber's length and $r$ is the fiber's radius; the
values used are described in Table~\ref{tab:beta0.135}.

\begin{table}
\centering
\begin{tabular}{|c|c|c|c|c|c|c|c|c|}
\hline
Fiber length & 0.6 & 0.4 & 0.3 & 0.25 & 0.225 & 0.2 & 0.18\\
\hline
Fiber radius & 0.03 & 0.02 & 0.015 & 0.0125 & 0.01125 & 0.01 & 0.009\\
\hline
Number of Fibers & 79 & 268 & 637 & 1100 & 1509 & 2149 & 2947\\
\hline
\end {tabular}
\captionof{table}{Fiber Parameters} \label{tab:beta0.135}
\end{table}

We could not find an exact estimate of the error
convergence, but we expect the solution to improve
as the number of fibers increases because:
\begin{itemize}
  \item the fiber radius $a$ reduces, improving of the average non-slip
        condition,
  \item more fibers result in a more homogeneous material
        on the planes parallel to the $\langle x,y \rangle$ plane.
\end{itemize}

Following \cite{pan2008analysis}, we consider a short
fiber E-glass/urethane composite:
the fiber and matrix Young's modulus are,
respectively, $E_f~=~70 GPa$ and $E_m~=~3 GPa$,
while the Poisson ratios are $\nu_f~=~0.2$ and
$\nu_m~=~0.38$. These values werex converted
to the Lam\'{e} parameters using the classic
formulas for hyper elastic materials.

The predicted parameters for the composite are:
$E_C~=~2.20 GPa$ and $\nu_C~=~0.38 GPa$;
these are slightly different from \cite{pan2008analysis} because,
in the Halpin-Tsai equations, $l/d$ was used instead of $2l/d$	.
The boundary conditions used for the pull tests are the same
of Paragraph \ref{par:pull}.

We limit the global refinements of $\Gamma$,
in order to obtain cells of approximately the same size on both $\Omega$
and the fibers.
The results are shown in Figure \ref{fig:rand_1}: as the number of
fiber increases the error reduces, but because the random fiber
model is more complex than the homogeneous one,
the final error achieved is higher than the one
reached in the previous test.

\section{Conclusions}
\label{sec:conclusions}

Starting from a linearly elastic description of bi-phasic materials,
we derive a new formulation for fiber reinforced materials where
independent meshes are used to discretize the fibers and the elastic
matrix, and the copling between the two phases is obtained via a
distributed Lagrange multiplier.

We prove existence and uniqueness of a solution for the final saddle
point problem, and we analyze a simplified model where fibers are
discretised as one-dimensional.

The model is validated against the Rule of Mixtures and the Hapin-Tsai
equations, where we test our discretisation with uniform and random
distributions of fibers. The true benefit of our model, however, lies
in the possibility to tackle complex and intricated fiber structures
independently from the background elastic matrix discretization,
opening the way to the efficient simulation of complex multi-phase
materials.

From the numerical analysis point of view, there are some issues that
deserve further development, such as finding better preconditioners for
the final system, and exploring different solutions for the
one dimensional coupling operator.

The formulation of our method makes it particularly suited for
extensions to more complex situations, e.g., three-phasic materials,
or materials where perfect-bond is replaced by other, more realistic
conditions.

\section*{Acknowledgements}

The authors would like to thank Prof. Lucia Gastaldi, for the fruitful
conversations and suggestions that greatly improved this manuscript,
and to thank Dr. Giovanni Noselli, and Dr. Maicol Caponi
for their priceless suggestions and insights in their fields of expertise.

\bibliographystyle{abbrv}
\bibliography{ref}

\end{document}